\theoremstyle{plain}
\newtheorem{thrm}{Theorem}[section]
\newtheorem{lemma}[thrm]{Lemma}
\begin{document}
\newcommand{\sn}{\mathbb{S}^{n-1}}
\newcommand{\SL}{\mathcal L^{1,p}( D)}
\newcommand{\Lp}{L^p( Dega)}
\newcommand{\CO}{C^\infty_0( \Omega)}
\newcommand{\Rn}{\mathbb R^n}
\newcommand{\Rm}{\mathbb R^m}
\newcommand{\R}{\mathbb R}
\newcommand{\Om}{\Omega}
\newcommand{\Hn}{\mathbb H^n}
\newcommand{\aB}{\alpha B}
\newcommand{\eps}{\ve}
\newcommand{\BVX}{BV_X(\Omega)}
\newcommand{\p}{\partial}
\newcommand{\IO}{\int_\Omega}
\newcommand{\bG}{\boldsymbol{G}}
\newcommand{\bg}{\mathfrak g}
\newcommand{\bz}{\mathfrak z}
\newcommand{\bv}{\mathfrak v}
\newcommand{\Bux}{\mbox{Box}}
\newcommand{\e}{\ve}
\newcommand{\X}{\mathcal X}
\newcommand{\Y}{\mathcal Y}
\newcommand{\W}{\mathcal W}
\newcommand{\la}{\lambda}
\newcommand{\vf}{\varphi}
\newcommand{\rhh}{|\nabla_H \rho|}
\newcommand{\Ba}{\mathcal{B}_\beta}
\newcommand{\Za}{Z_\beta}
\newcommand{\ra}{\rho_\beta}
\newcommand{\na}{\nabla_\beta}
\newcommand{\vt}{\vartheta}

\numberwithin{equation}{section}

\newcommand{\RN} {\mathbb{R}^N}
\newcommand{\Sob}{S^{1,p}(\Omega)}
\newcommand{\Dxk}{\frac{\partial}{\partial x_k}}
\newcommand{\Co}{C^\infty_0(\Omega)}
\newcommand{\Je}{J_\ve}
\newcommand{\beq}{\begin{equation}}
\newcommand{\bea}[1]{\begin{array}{#1} }
\newcommand{\eeq}{ \end{equation}}
\newcommand{\ea}{ \end{array}}
\newcommand{\eh}{\ve h}
\newcommand{\Dxi}{\frac{\partial}{\partial x_{i}}}
\newcommand{\Dyi}{\frac{\partial}{\partial y_{i}}}
\newcommand{\Dt}{\frac{\partial}{\partial t}}
\newcommand{\aBa}{(\alpha+1)B}
\newcommand{\GF}{\psi^{1+\frac{1}{2\alpha}}}
\newcommand{\GS}{\psi^{\frac12}}
\newcommand{\HFF}{\frac{\psi}{\rho}}
\newcommand{\HSS}{\frac{\psi}{\rho}}
\newcommand{\HFS}{\rho\psi^{\frac12-\frac{1}{2\alpha}}}
\newcommand{\HSF}{\frac{\psi^{\frac32+\frac{1}{2\alpha}}}{\rho}}
\newcommand{\AF}{\rho}
\newcommand{\AR}{\rho{\psi}^{\frac{1}{2}+\frac{1}{2\alpha}}}
\newcommand{\PF}{\alpha\frac{\psi}{|x|}}
\newcommand{\PS}{\alpha\frac{\psi}{\rho}}
\newcommand{\ds}{\displaystyle}
\newcommand{\Zt}{{\mathcal Z}^{t}}
\newcommand{\XPSI}{2\alpha\psi \begin{pmatrix} \frac{x}{|x|^2}\\ 0 \end{pmatrix} - 2\alpha\frac{{\psi}^2}{\rho^2}\begin{pmatrix} x \\ (\alpha +1)|x|^{-\alpha}y \end{pmatrix}}
\newcommand{\Z}{ \begin{pmatrix} x \\ (\alpha + 1)|x|^{-\alpha}y \end{pmatrix} }
\newcommand{\ZZ}{ \begin{pmatrix} xx^{t} & (\alpha + 1)|x|^{-\alpha}x y^{t}\\
     (\alpha + 1)|x|^{-\alpha}x^{t} y &   (\alpha + 1)^2  |x|^{-2\alpha}yy^{t}\end{pmatrix}}
\newcommand{\norm}[1]{\lVert#1 \rVert}
\newcommand{\ve}{\varepsilon}

\title[A strong unique continuation property, etc.]{A strong unique continuation property for the heat  operator with Hardy type potential}

\author{Agnid Banerjee}
\address{Tata Institute of Fundamental Research\\
Centre For Applicable Mathematics \\ Bangalore-560065, India}\email[Agnid Banerjee]{agnid@tifrbng.res.in}

\author{Nicola Garofalo}
\address{Dipartimento di Ingegneria Civile, Edile e Ambientale (DICEA) \\ Universit\`a di Padova\\ 35131 Padova, ITALY}
\email[Nicola Garofalo]{nicola.garofalo@unipd.it}

\author{Ramesh Manna}
\address{Department of Mathematics, Indian Institute of Science, 560 012 Bangalore, India}
\email{rameshmanna@iisc.ac.in}

\thanks{First author is supported in part by SERB Matrix grant MTR/2018/000267 and by Department of Atomic Energy,  Government of India, under
project no.  12-R \& D-TFR-5.01-0520.}

\thanks{Second author is supported in part by the Progetto SID (Investimento Strategico di Dipartimento): ``Non-local operators in geometry and in free boundary problems, and their connection with the applied sciences", University of Padova, 2017, and by the Progetto SID: ``Non-local Sobolev and isoperimetric inequalities", University of Padova, 2019.}
\thanks{Third author is  supported by  C.V. Raman PDF, R(IA)CVR-PDF/2020/224}

%
%
%
\keywords{Strong unique continuation, Inverse square potential, Carleman Estimates}
\subjclass{35A02, 35B60, 35K05}

\maketitle
\begin{abstract}
In this note we prove the strong unique continuation property at the origin for the solutions of the parabolic differential inequality 
\[
|\Delta u - u_t| \leq \frac{M}{|x|^2} |u|,
\]
with the critical inverse square potential. Our main result sharpens a previous one of Vessella concerned with the subcritical case. 

\end{abstract}

\section{Introduction}
The unique continuation property (ucp) for second order elliptic and parabolic equations represents one of the most fundamental aspects of pde's with a long history  and several important ramifications. In this paper we prove the strong unique continuation property (sucp) for solutions to the parabolic differential inequality 
\begin{equation}\label{par}
|\Delta u - u_t| \leq \frac{M}{|x|^2}\ |u|,
\end{equation}
where $M > 0$ is arbitrary. In \cite{V} (see also \cite{EV}) Vessella proved a general sucp result for sub-critical parabolic equations of the type 
\begin{equation}\label{v}
|\operatorname{div} (A(x,t) \nabla u) - u_t| \leq \frac{M}{|x|^{2-\delta}} |u|,\ \ \ \ \delta>0,
\end{equation}
under Lipschitz regularity assumptions on the principal part $A(x,t)$. This provided a parabolic counterpart to the previous work of H\"ormander in \cite{Ho}. Comparing \eqref{par} with \eqref{v}, it is obvious that our result sharpens Vessella's theorem, when the latter is specialised to the heat equation. 

As it is well-known, the inverse-square potential $V(x) = \frac{M}{|x|^2}$ represents a critical scaling threshold in quantum mechanics \cite{BG}, and it is equally known that its singularity is the limiting case for the sucp for the differential inequality $|\Delta u| \le \frac{M}{|x|^m} |u|$, see the counterexample in \cite{GL}. Such potential fails to be in $L^{n/2}_{loc}$, and in general does not have small $L^{n/2,\infty}$ seminorm, thus in the context of the Laplacian the sucp cannot be treated by the celebrated result of Jerison and Kenig in \cite{JK} or the subsequent improvement by Stein in the appendix to the same paper. 
We recall that, in the time-independent case of the Laplacian, the sucp for the unrestricted inverse square potential was proved by Pan in \cite{Pa}. One should also see Regbaoui \cite{RR} for further generalisations to variable coefficient equations and Lin, Nakamura \& Wang \cite{LNW} for quantitative results. 

The main new  ingredient in this note is the following   improved Carleman estimate for  the heat operator $\Delta -\partial_t$ in a space-time cylinder that is tailor-made for the differential inequality  \eqref{par}. Such result replaces the corresponding sub-critical estimate in \cite[Theorem 13]{V} (see  also \cite[Theorem 2]{EV}). Similarly to the time-independent case in \cite{Pa} and \cite{RR}, our proof of Theorem \ref{thm2} also exploits the spectral gap on $\sn$. In addition, it relies in an essential way on a delicate a priori estimate which we prove in Lemma \ref{L:del} below, and which we feel is of independent interest. We emphasise for the unfamiliar reader that, although related sub-critical estimates appear in the works \cite{EV}, \cite{V}, the weight in our Carleman estimate \eqref{est1} is different from that in such works, and a new result was required.

\begin{thrm} \label{thm2}
 Let $R<1$ and let  $u \in C_0^{\infty}((B_R \setminus \{0\}) \times (0,T))$. There exist universal constants $\alpha(n)>>1$, and  $\ve(n)  \in (0, 1)$,   such that for all $\alpha>\alpha(n)$ of the form $\alpha = k + \frac{n+1}2$, with $k\in \mathbb N$, and every $0<\ve<\ve(n)$, one has 
\begin{align}\label{est1}
&\alpha \int_{B_R \times (0,T)} |x|^{-2\alpha-4} e^{2\alpha |x|^{\ve}} \, u^2  dxdt  +\alpha^3 \int_{B_R \times (0, T)} |x|^{-2\alpha- 4+\ve} e^{2\alpha |x|^{\ve}}  u^2 \\  
&\leq  C \int_{B_R \times (0, T)} |x|^{-2\alpha} e^{2\alpha |x|^{\ve}} (\Delta u - \partial_t u)^2   dx dt,\notag
\end{align}
where $C= C(\ve, n)>0$.
\end{thrm}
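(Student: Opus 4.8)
The plan is to follow the classical strategy for Carleman estimates adapted to the critical inverse-square potential: pass to polar-logarithmic coordinates, conjugate the heat operator by the weight, split the resulting operator into a symmetric and an antisymmetric part, and estimate the cross term via integration by parts together with the spectral gap on $\sn$. Concretely, I would first substitute $u(x,t) = |x|^{\alpha} e^{-\alpha|x|^\ve} v(x,t)$ (or, after writing $|x| = e^{-s}$ to straighten the radial direction, work with the Emden--Fowler-type change of variables so that the weight becomes $e^{\alpha \sigma(s)}$ for a suitable convex profile $\sigma$ with $\sigma'' \sim \ve^2 e^{-\ve s} \gg 0$). The point of the exotic weight $|x|^{-2\alpha} e^{2\alpha|x|^\ve}$, as opposed to the pure power weights used in the subcritical setting of \cite{V}, \cite{EV}, is precisely that the extra factor $e^{2\alpha|x|^\ve}$ produces, after conjugation, a strictly positive zeroth-order term of size $\alpha^2\ve^2|x|^\ve$ in the radial part — this is what lets us absorb the critical potential $M/|x|^2$, whose scaling exactly matches the leading (non-coercive) terms. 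Choosing $\alpha = k + \frac{n+1}{2}$ with $k\in\mathbb N$ keeps $\alpha^2 - (\text{angular Laplacian eigenvalue shift})$ bounded away from zero uniformly in $k$, which is the spectral-gap mechanism on $\sn$ already exploited by Pan \cite{Pa} and Regbaoui \cite{RR}.

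Next I would write the conjugated operator as $L_\alpha = S + A$, with $S$ (formally) self-adjoint and $A$ (formally) skew-adjoint on $L^2(B_R\times(0,T))$ with respect to an appropriate measure. Here $S$ collects the Laplacian in the new coordinates plus the zeroth-order terms coming from $|\nabla(\alpha|x|^\ve - \alpha\log|x|)|^2$ and its Laplacian, while $A$ contains the first-order radial drift and, crucially, the time derivative $\partial_t$ — note that $\partial_t$ is skew-adjoint in $t$, so the time variable contributes only to $A$ and produces no "bad" sign in $\|L_\alpha v\|^2 = \|Sv\|^2 + \|Av\|^2 + \langle [S,A]v,v\rangle$. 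The commutator $[S,A]$ is where the positive lower-order terms are harvested: the radial drift commuted against the $\partial_r^2$ and the zeroth-order pieces will generate terms of the form $\alpha^3 |x|^{-4+\ve}$ and $\alpha|x|^{-4}$, matching exactly the left side of \eqref{est1}. The commutator of $A$'s $\partial_t$ with the $t$-independent part of $S$ vanishes, so there is no loss from the parabolic structure; this is the standard observation that for $\partial_t$-type Carleman estimates one only needs the weight to be independent of $t$ (or monotone in $t$) to keep the commutator manageable.

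The main obstacle — and, judging from the introduction, the reason Lemma 1.1 (the a priori estimate, referenced as Lemma \ref{L:del}) is invoked — is that the commutator $[S,A]$ is not pointwise positive: after integrating by parts one is left with a quadratic form that is coercive only modulo boundary-type and cross terms involving the angular derivatives $\nabla_{\sn} v$ and mixed radial-angular second derivatives. Controlling these requires an additional a priori bound on $\nabla v$ (equivalently, on the "energy" of $v$) in terms of $\|L_\alpha v\|$, and this is exactly the delicate estimate that Lemma \ref{L:del} should supply; without it, the angular terms cannot be reabsorbed and the spectral gap alone is insufficient in the critical case (in the subcritical case the extra power $|x|^{\delta}$ gives room to spare, but here the estimate is genuinely tight). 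So the skeleton of the argument is: (i) change variables and conjugate; (ii) split $L_\alpha = S+A$ and expand $\|L_\alpha v\|^2$; (iii) compute $[S,A]$ and identify the positive principal lower-order terms $\alpha|x|^{-4}$ and $\alpha^3|x|^{-4+\ve}$; (iv) use the spectral gap on $\sn$ together with the choice $\alpha = k+\frac{n+1}{2}$ to handle the angular contributions; (v) invoke Lemma \ref{L:del} to absorb the remaining gradient/cross terms; (vi) undo the change of variables and restore $u = |x|^\alpha e^{-\alpha|x|^\ve} v$ to obtain \eqref{est1} with $C = C(\ve,n)$. The small-$\ve$ threshold $\ve(n)$ enters to make the perturbation $e^{2\alpha|x|^\ve}$ close enough to a pure power that the spectral-gap argument survives, and the largeness $\alpha(n)$ is needed so that the $\alpha^3$-term dominates all error terms of lower order in $\alpha$.
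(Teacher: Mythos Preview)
Your broad strategy (conjugate by the weight, split the operator, integrate by parts, invoke the spectral gap on $\sn$) matches the paper, but you have misread both the parabolic obstruction and the role of Lemma \ref{L:del}. You claim that since $[\partial_t,S]=0$ there is ``no loss from the parabolic structure'' and that the residual difficulty is angular, to be fixed by a gradient-type a priori bound on $\nabla v$. Neither is what actually happens. The paper does \emph{not} use a clean symmetric/antisymmetric split: it writes $\Delta u-u_t=a+b$ with $a$ carrying only $\beta(\beta+n-2)v+\Delta_{\sn}v+(2\beta+n-1)rv_r$ and $b$ carrying $v_{rr}$, $v_t$ and the $\ve$-perturbation terms, and then uses $(a+b)^2\ge a^2+2ab$. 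The $a^2$ term yields the critical $\tfrac{\alpha^2}{4}\int r^{-n}v^2$ via the spectral gap, but $2ab$ inevitably produces a mixed term $\int r^{-n+3}v_tv_r$ which, after Cauchy--Schwarz, leaves a \emph{negative} $-25\int r^{-2\alpha}e^{2\alpha r^\ve}u_t^2$ and a negative $-C\alpha^4\ve\int r^{-2\alpha-4+\ve}e^{2\alpha r^\ve}u^2$ in the intermediate estimate \eqref{hold1}. The angular and mixed radial-angular second-order terms, by contrast, are harmless and are disposed of directly in the main computation (see \eqref{sp4} and \eqref{3.9}).

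Lemma \ref{L:del} is precisely the device that repairs those two negative contributions: its statement is
\[
\frac{C_0}{\alpha}\int r^{-2\alpha}e^{2\alpha r^\ve}u_t^2+C_0\alpha^3\ve^2\int r^{-2\alpha-4+\ve}e^{2\alpha r^\ve}u^2\le\int r^{-2\alpha}e^{2\alpha r^\ve}(\Delta u-u_t)^2,
\]
i.e.\ a bound on $u_t^2$ (with weight $1/\alpha$) together with the subcritical $\alpha^3\ve^2$ term---not a gradient/energy estimate. Its proof runs a \emph{second}, different $(a+b)$ split (now $a=v_{rr}+Bv+r^{-2}\Delta_{\sn}v$, $b=Av_r-v_t$) and then estimates the quantity $\mathcal R=6\int r^{-n+3}v_tv_r+\int r^{-n+2}(rAv_r-rv_t)^2$ in two ways, once completing the square around $v_r$ and once around $v_t$, to extract simultaneously positive multiples of $\alpha\int r^{-n+2}v_r^2$ and of $\alpha^{-1}\int r^{-n+4}v_t^2$. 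Multiplying Lemma \ref{L:del} by a large constant times $\alpha$ and adding it to \eqref{hold1} absorbs both bad terms and yields \eqref{est1}. In short: the obstruction is the time derivative, not the angular gradient, and Lemma \ref{L:del} supplies $u_t^2$-control and the subcritical $\alpha^3$ term, not control of $\nabla v$.
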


With Theorem \ref{thm2} in hands, we establish the following strong unique continuation result. In the sequel, parabolic vanishing to infinite order means that as $r \to 0$ one has for all $k>0$,
\begin{equation}\label{vp}
\int_{B_r \times (0, T)} u^2 = O(r^k).
\end{equation}

\begin{thrm}\label{thm1}
Suppose that for some $M, R, T>0$ the function $u \in H^{2,1}_{loc}$ be a solution in $B_R \times (0, T)$ to the  differential inequality \eqref{par}. If $u$ parabolically vanishes to infinite order, then $u \equiv 0$ in $B_R \times (0, T)$. 
\end{thrm}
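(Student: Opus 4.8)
The plan is to deduce Theorem \ref{thm1} from the Carleman estimate of Theorem \ref{thm2} by the standard argument: one tests the Carleman inequality against a suitable cutoff of the solution $u$, inserts the differential inequality \eqref{par} on the right-hand side, and then exploits the fact that the potential $|x|^{-2}$ matches precisely the gain of four powers of $|x|$ on the left-hand side of \eqref{est1}, so that the Hardy-potential contribution can be absorbed on the left for $\alpha$ large. Letting $\alpha \to \infty$ then forces $u$ to vanish near the origin, and classical (non-critical) parabolic unique continuation propagates this to all of $B_R \times (0,T)$.

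More precisely, first I would fix a radial cutoff $\eta \in C_0^\infty(B_{R/2})$ with $\eta \equiv 1$ on $B_{R/4}$, and a temporal cutoff, and apply \eqref{est1} to $v = \eta u$ (after a density/approximation step to handle the fact that $u$ is only in $H^{2,1}_{loc}$ rather than smooth and compactly supported away from the origin — here the infinite-order vanishing hypothesis \eqref{vp} is crucial to justify truncating $u$ near $0$ and to control the boundary terms it produces). The commutator terms $[\Delta - \partial_t, \eta] u$ are supported in the annulus $\{R/4 \le |x| \le R/2\}$, where the weight $|x|^{-2\alpha} e^{2\alpha|x|^\ve}$ is bounded by $(R/4)^{-2\alpha} e^{2\alpha (R/2)^\ve}$, while on the region $B_{R/4}$ where $\eta \equiv 1$ the weight dominates from below by $(R/4)^{-2\alpha} e^{2\alpha(R/4)^\ve}$-type quantities; since $R<1$ the exponential $e^{2\alpha|x|^\ve}$ is increasing in $|x|$, and one arranges the comparison of weights so that the annular error is exponentially smaller than the main term. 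On the main region, $(\Delta u - \partial_t u)^2 \le M^2 |x|^{-4} u^2$, and the corresponding integral $\int |x|^{-2\alpha - 4} e^{2\alpha|x|^\ve} u^2$ is exactly the first term on the left of \eqref{est1}; choosing $\alpha > \alpha(n)$ large enough that $C M^2 < \alpha$ absorbs it, leaving the strictly positive term $\alpha^3 \int_{B_{R/4}\times(0,T)} |x|^{-2\alpha - 4+\ve} e^{2\alpha|x|^\ve} u^2$ bounded by an exponentially-in-$\alpha$-small quantity.

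Dividing through and letting $\alpha \to \infty$ along the admissible sequence $\alpha = k + \frac{n+1}{2}$ then yields $u \equiv 0$ on a smaller cylinder $B_{r_0} \times (0,T)$ for some $r_0 \in (0, R/4)$; here one uses that the ratio of the weight on the effective support to the weight bounding the error tends to $0$ geometrically. Finally, to pass from vanishing near the origin to $u \equiv 0$ on all of $B_R \times (0,T)$, I would invoke classical unique continuation for the heat operator with bounded (away from the origin) potential — on the region $\{|x| > r_0/2\}$ the potential $M/|x|^2$ is bounded, so space-like strong unique continuation (or the weak ucp of Saut--Scheurer / Lin for parabolic equations) applies and propagates the zero set. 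The main obstacle I anticipate is the approximation/cutoff step near the origin: Theorem \ref{thm2} requires test functions supported away from $\{0\}$, and one must verify that the truncation of $u$ at radius $\delta \to 0$ produces error terms (supported in $\{\delta \le |x| \le 2\delta\}$) whose weighted norms are controlled by the infinite-order vanishing \eqref{vp} uniformly in the large parameter $\alpha$ — this requires matching the polynomial decay $O(r^k)$ against the weight growth $|x|^{-2\alpha}$, i.e. choosing the order of vanishing $k$ appropriately relative to $\alpha$, and is the delicate bookkeeping that makes the argument work.
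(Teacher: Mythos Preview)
Your outline handles the spatial cutoff correctly, but it underestimates the difficulty caused by the \emph{temporal} cutoff, and this is where the argument as stated would fail. When you multiply by a time cutoff $\eta(t)$, the commutator produces a term $u\phi\eta_t$ whose support in $t$ lies near the ends of the time interval but whose support in $x$ is the \emph{entire} set where $\phi\not\equiv 0$, not just the outer annulus. The resulting error
\[
\int r^{-2\alpha}e^{2\alpha r^{\ve}}\,u^2\phi^2\eta_t^2
\]
therefore carries the full weight $r^{-2\alpha}$ down to radii as small as your inner cutoff, and cannot be dominated by the outer-annulus term by a simple weight comparison; nor can it be killed by infinite-order vanishing, since on the time slab where $\eta_t\neq 0$ you have no vanishing hypothesis at all. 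This is precisely the obstacle the paper singles out as making the parabolic deduction ``delicate.''

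The paper resolves it by choosing $\eta$ to decay \emph{exponentially} at the endpoints (see \eqref{d4}) and then using the subcritical term $\alpha^3\int r^{-2\alpha-4+\ve}e^{2\alpha r^\ve}v^2$ on the left of \eqref{est1} --- which you treated as a throwaway positive remainder --- to absorb the time-cutoff error: one shows that on the region where $Cr^3\eta_t^2/\eta^2$ exceeds $2\alpha^3$, the factor $r^{-2\alpha-4+\ve}e^{2\alpha r^\ve}\eta$ is itself bounded by $1$ (this is where the cube on $\alpha$ matters, producing a gain of $\alpha^{9/8}$ over the linear weight exponent; see \eqref{claim}--\eqref{p2}). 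So the subcritical term is not a bonus but the mechanism that controls the time-derivative error, and without that step your ``let $\alpha\to\infty$'' move does not close. Your treatment of the inner cutoff at radius $\delta\to 0$ via \eqref{vp} is correct and matches the paper's handling of the $r_1$-annulus.
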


For the reader's comprehension we remark that the first integral in the left-hand side of \eqref{est1} represents the critical term  which, in the proof of Theorem \ref{thm1}, allows to absorb the term with the inverse square potential in the differential inequality \eqref{par}. The   second integral, instead,  can be thought of as a sub-critical term. As the proof of Theorem \ref{thm1} will show the presence of the coefficient $\alpha^3$ in front of such term plays a crucial role  in the proof of Theorem \ref{thm1}, similarly to what happens in \cite{V}.

The plan of the paper is as follows. In Section \ref{s:main} we prove Theorem \ref{thm2}. The second part of the section is devoted to proving the crucial Lemma  \ref{L:del} which is needed for the completion of the proof of Theorem \ref{thm2}. In Section \ref{S:2} we establish Theorem \ref{thm1}. One word of caution for the reader. It is generally accepted among experts that, once a proper Carleman estimate is available, the  ucp, or the sucp follow from a standard application of the former. While this is generally true, in the time-dependent setting of the present note deducing Theorem 
 \ref{thm1} from Theorem \ref{thm2} requires a delicate adaptation of the analogous treatment of the subcritical case in \cite{V}. For this reason, we have not followed the tradition of skipping details, but we have carefully presented them in the proof of Theorem \ref{thm1}. 
 
\medskip

\noindent \textbf{Acknowledgment:} This work began during a visit of the second named author to the Tata Institute of Fundamental Research, Centre For Applicable Mathematics, in Bangalore, during February 2020. He gratefully acknowledges the gracious hospitality of the Centre and the warm work environment.

\section{Proof of Theorem \ref{thm2}}\label{s:main}

We begin by introducing the relevant notation.
Given $r>0$ we  denote by $B_r(x_0)$ the Euclidean ball centred at  $x_0 \in \R^n$ with radius $r$. When $x_0=0$, we will use the simpler notation $B_r$.   A generic point in space time $\Rn \times (0, \infty)$ will be denoted by $(x,t)$. For notational convenience, $\nabla f$ and  $\operatorname{div}\ f$ will respectively refer to the quantities  $\nabla_x f$ and $ \operatorname{div}_x f$ of a given function $f$.   The partial derivative in $t$ will be denoted by $\p_t f$ and also by $f_t$. We indicate with $C_0^{\infty}(\Omega)$ the set of compactly supported smooth functions in the region $\Omega$  in space-time. By $H^{2,1}_{loc}(\Omega)$ we refer to the parabolic Sobolev class of functions $f\in L^2_{loc}(\Om)$ for which the weak derivatives $\nabla f, \nabla^2 f $ and $\partial_t f$ belong to $L^{2}_{loc}(\Om)$. For a point $x\in \Rn\setminus\{0\}$, we will routinely adopt the notation $r = r(x) = |x|$ and $\omega = \frac{x}{r}\in \sn$, so that $x = r \omega$. The radial derivative of a function $v$ is 
$v_r = <\nabla v,\frac{x}{|x|}>$.

The following simple observations will be repeatedly used in what follows. Let $\gamma\in \R$, then in $\Rn\setminus\{0\}$ we have
\begin{equation}\label{div}
\operatorname{div}(r^{-\gamma} x) = (n-\gamma) r^{-\gamma}.
\end{equation}
In particular, \eqref{div} gives
\begin{lemma}\label{L:radial}
Let $f\in C^\infty_0(\Rn\setminus\{0\})$, $g\in C^\infty(\Rn\setminus\{0\})$, then
\[
\int_{\Rn} f_r g dx = - \int_{\Rn} f g_r dx - (n-1) \int_{\Rn} r^{-1} f g dx.
\]
\end{lemma}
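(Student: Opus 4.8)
The plan is to recognise the left-hand side as part of the divergence of a suitable vector field and then integrate by parts, using the compact support of $f$ away from the origin to discard the boundary term. Concretely, set $V(x) = r^{-1} x$, which is smooth on $\Rn \setminus \{0\}$, and note that by the very definition of the radial derivative one has $f_r = \langle \nabla f, r^{-1} x\rangle = \langle \nabla f, V\rangle$. Hence, by the product rule for the divergence,
\[
\operatorname{div}(f g V) = \langle \nabla f, V\rangle\, g + f\, \langle \nabla g, V\rangle + f g\, \operatorname{div} V = f_r\, g + f\, g_r + f g\, \operatorname{div} V .
\]
Applying \eqref{div} with $\gamma = 1$, which gives $\operatorname{div} V = \operatorname{div}(r^{-1} x) = (n-1) r^{-1}$, this becomes
\[
\operatorname{div}(f g V) = f_r\, g + f\, g_r + (n-1)\, r^{-1} f g .
\]

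Next I would integrate this identity over $\Rn$. Since $f \in C^\infty_0(\Rn \setminus\{0\})$ while $g$ and $V$ are smooth on $\Rn\setminus\{0\}$, the vector field $f g V$ belongs to $C^\infty_0(\Rn\setminus\{0\})$; in particular the possible singularities of $g$ and of $V$ at the origin are harmless, as they are cut off by $f$. The divergence theorem therefore yields $\int_{\Rn} \operatorname{div}(f g V)\, dx = 0$, and the previous display gives
\[
0 = \int_{\Rn} f_r\, g\, dx + \int_{\Rn} f\, g_r\, dx + (n-1)\int_{\Rn} r^{-1} f g\, dx ,
\]
which is exactly the claimed identity after transposing the last two terms.

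There is no genuine obstacle in this argument; the only point deserving a word of care is that $g$ need not be integrable near $0$, but this is immaterial because each integrand above carries the factor $f$, which vanishes identically in a neighbourhood of the origin, so all the integrals are over a compact subset of $\Rn\setminus\{0\}$ and are finite. The whole computation rests on the elementary divergence identity \eqref{div}, which is the one repeatedly invoked in the sequel.
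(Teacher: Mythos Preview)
Your proof is correct and follows exactly the same approach as the paper: both compute $\operatorname{div}(fg\, r^{-1} x)$ using \eqref{div} with $\gamma=1$ and then integrate, exploiting the compact support of $f$ in $\Rn\setminus\{0\}$. Your write-up merely spells out in more detail why the boundary term vanishes and why the possible singularity of $g$ at the origin is harmless.
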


\begin{proof}
It suffices to observe that \eqref{div} gives
\[
\operatorname{div}(fg r^{-1} x) = f g_r + g f_r +(n-1) r^{-1} fg.
\]
Integrating this identity we reach the desired conclusion.

\end{proof}

\begin{proof}[Proof of Theorem \ref{thm2}] 
In all subsequent integrals, for given $R\in (0,1)$, $T>0$, the domain of integration will be the parabolic cylinder $B_R \times (0,T)$ (or, for that matter, the whole of $\Rn\times \R$, in view of the support property of the integrands), but this will not be explicitly indicated. Nor, we will explicitly write the measure $dx dt$ in any of the integrals involved. Let $u \in C_0^{\infty}((B_R \setminus \{0\}) \times (0,T))$.  We set $v=r^{-\beta} e^{\alpha r^\ve}  u$, where $\beta$  is to be carefully chosen subsequently. Clearly,  $u= r^{\beta} e^{-\alpha r^\ve} v$.  A standard calculation shows
 \[
 \Delta(r^{\beta} e^{-\alpha r^\ve})=\left(\alpha^2 \ve^2 r^{\beta+2\ve-2} + \beta(\beta+n-2)r^{\beta-2}- \alpha\ve\left((2\beta+\ve+ n-2)\right) r^{\beta+\ve-2}\right) e^{-\alpha r^{\ve}}.
 \]
We thus have
 \begin{align*}
\Delta u & =r^{\beta} e^{-\alpha r^\ve}  \Delta v+\left(\alpha^2 \ve^2 r^{\beta+2\ve-2} + \beta(\beta+n-2)r^{\beta-2}- \alpha\ve\left((2\beta+\ve+ n-2)\right) r^{\beta+\ve-2}\right) e^{-\alpha r^{\ve}} v
\\
& + \left(2\beta r^{\beta-2} -2 \ve \alpha r^{\beta+\ve-2}\right) e^{-\alpha r^{\ve}} <x, \nabla v>.
\end{align*}
Since $\Delta v(x,t) =v_{rr}(r\omega,t)+\frac{n-1}{r} v_r(r\omega,t)+\frac{1}{r^2} \Delta_{\sn} v(r\omega,t),$
where $\omega\in \sn$ and $\Delta_{\sn}$ denotes the Laplacian on $\mathbb{S}^{n-1}$,  we obtain
\begin{align}\label{comp1}
& \Delta u -\partial_tu = r^\beta e^{-\alpha r^{\ve}} \bigg[\left(\alpha^2 \ve^2 r^{2\ve-2} + \beta(\beta+n-2)r^{-2}- \alpha\ve\left(2\beta+\ve+ n-2\right) r^{\ve-2}\right) v
\\
& + \left((2\beta+n-1) r^{-1} -2 \alpha \ve r^{\ve-1}\right) v_r + r^{-2} \Delta_{\sn} v+ v_{rr} - v_t\bigg].\notag
\end{align}
We now apply the numerical inequality $(a+b)^2 \geq a^2 + 2ab$, with 
\[
a= r^{\beta -2} e^{-\alpha r^\ve} \left( \beta(\beta+n-2) v + \Delta_{\sn} v + (2\beta+ n-1)r v_r  \right),
\] 
and 
\[
b = r^\beta e^{-\alpha r^\ve}  \left(\alpha^2 \ve^2 r^{2\ve-2} v - \alpha\ve(2\beta+\ve+ n-2) r^{\ve-2} v
-2 \alpha \ve r^{\ve-1} v_r + v_{rr} - v_t\right),
\]
obtaining 
\begin{align}\label{rt12nd}
&\int r^{-2\alpha}  e^{2\alpha r^\ve}(\Delta u - \partial_t u)^2  \geq \int r^{-2\alpha+2\beta-4} (\beta(\beta+n-2) v + \Delta_{\sn}v )^2
\\
&  + (2\beta+n-1)^2 \int r^{-2\alpha + 2\beta -2} v_r^2 + 2(2\beta+n-1)\int r^{-2\alpha+2\beta-3} v_r \Delta_{\sn}v
\notag
\\
& +2\beta(\beta+n-2)(2\beta+n-1)\int r^{-2\alpha+2\beta-3} v v_r
\notag
\\
& +2\beta(\beta+n-2)\int r^{-2\alpha+2\beta-2}v v_{rr} \notag
 +2\int r^{-2\alpha+2\beta-2}v_{rr}\Delta_{\sn}v
 \notag\\
 & + 2(2\beta+n-1) \int r^{-2\alpha + 2\beta -1} v_r v_{rr} -2\beta(\beta+n-2)\int r^{-2\alpha+2\beta-2} v v_t
 \notag
 \\
 & -2\int r^{-2\alpha+2\beta-2} v_t \Delta_{\sn}v  -2 (2\beta+n-1) \int r^{-2\alpha+2\beta-1} v_t v_r 
 \notag\\
& - 2\alpha \ve (2\beta+n-1) (2 \beta+ \ve + n-2) \int r^{-2\alpha + 2\beta+\ve -3} vv_r 
+ 2 \alpha^2 \ve^2 (2\beta+n-1)  \int r^{-2\alpha+2\beta+2\ve -3} vv_r 
\notag\\
& - 4 \alpha \ve( 2\beta+n-1)  \int r^{-2 \alpha +2\beta + \ve -2} v_r^2 - 4  \alpha \ve \beta(\beta+n-2) \int r^{-2\alpha+2\beta +\ve -3} vv_r 
\notag\\
&    -  2 \alpha \ve \beta(\beta+n-2)  (2 \beta+ \ve+n-2)  \int r^{-2\alpha + 2\beta +\ve -4} v^2 
 +2 \alpha^2 \ve^2 \beta(\beta+n-2)   \int r^{-2\alpha +2\beta +2\ve -4} v^2
 \notag\\
 &  - 4  \alpha  \ve \int r^{-2\alpha +2\beta+ \ve-3 } v_r \Delta_{\sn} v   -  2 \alpha\ve\left(2\beta+\ve+ n-2\right) \int r^{-2\alpha + 2\beta + \ve -4} v \Delta_{\sn} v 
\notag\\
& +2 \alpha^2\ve^2  \int r^{-2\alpha +2\beta +2\ve -4} v \Delta_{\sn} v.
\notag
\end{align}
We now handle each integral in the right-hand side of \eqref{rt12nd} separately. Our first objective is to select $\beta$ in such a way that the integral $\int r^{-2\alpha+2\beta-3} v v_r
$ vanishes. We note that such integral multiplies the cubic factor $2\beta(\beta+n-2)(2\beta+n-1)$ in the forth term in the right-hand side of \eqref{rt12nd}. To accomplish this we observe that Lemma \ref{L:radial} gives 
\begin{align}\label{1}
& 2\int r^{-2\alpha+2\beta-3} v  v_r = \int r^{-2\alpha+2\beta-3} (v^2)_r
\\
& = - (-2\alpha+2\beta-3) \int r^{-2\alpha+2\beta-4} v^2 - (n-1) \int r^{-2\alpha+2\beta-4} v^2 = 0,
\notag
\end{align}
provided that we choose 
\begin{eqnarray}\label{choice}
2\beta -2\alpha-4=-n\ \Longleftrightarrow\ \beta = \alpha + 2 - \frac{n}2.
\end{eqnarray}
We now substitute the value of $\beta$ given by \eqref{choice} in the remaining integrals in the right-hand side of \eqref{rt12nd} obtaining the following conclusions. First, we  have
\[
\int r^{-2\alpha+2\beta-2} v v_t = \int r^{-n+2} v v_t = \int \partial_t (r^{-n+2}v^2)=0.
\]
Next, using polar coordinates and Stokes' theorem on $\sn$, we find
\begin{align*}
& 2 \int r^{-2\alpha+2\beta-2} v_t \Delta_{\sn}v = 2 \int r^{-n+2} v_t \Delta_{\sn}v = 2 \int_0^T \int_0^{\infty} r \int_{\mathbb{S}^{n-1}} v_t \Delta_{\sn}v d \omega dr dt\\
&= - 2 \int_0^T \int_0^{\infty} r \int_{\mathbb{S}^{n-1}} <\nabla_{\sn}v, \nabla_{\sn}v_t>dr d\omega dt=- \int_{0}^{\infty} r \int_0^{T}  \partial_t (\int_{\mathbb{S}^{n-1}} |\nabla_{\sn}v|^2 d\omega) dt dr = 0.
\end{align*} 
Similarly, we have
\begin{align} \label{zero}
& 2 \int r^{-2\alpha+2\beta-3} v_r \Delta_{\sn}v = 2 \int r^{-n+1} v_r \Delta_{\sn}v 
\\
& = 2 \int_0^T \int_0^{\infty}\int_{\mathbb{S}^{n-1}} v_r \Delta_{\sn} v d \omega dr dt = - 2 \int_0^T\int_0^{\infty}\int_{\mathbb{S}^{n-1}} \langle \nabla_{\sn} v_r, \nabla_{\sn} v \rangle d \omega dr dt
\nonumber
\\
& = - \int_0^T \int_0^{\infty}\frac{d}{dr}\int_{\mathbb{S}^{n-1}} |\nabla_{\sn} v|^2 d \omega dr dt=0. \nonumber
\end{align}
Now, an integration by parts similar to \eqref{zero} gives
\begin{equation}\label{sp1}
- 4  \alpha \ve \int r^{-2\alpha + 2\beta + \ve - 3} v_r \Delta_{\sn} v  = -2  \alpha  \ve^2 \int r^{-n+\ve} |\nabla_{\sn} v|^2.
\end{equation}
On the other hand, applying again the divergence theorem on $\sn$, we find
\begin{align}\label{sp2}
& -  2 \alpha\ve\left(2\beta+\ve+ n-2\right)\int r^{-2\alpha+2\beta + \ve - 4} v \Delta_{\sn} v
 = 2 \alpha\ve\left(2\alpha+\ve+ 2\right)  \int r^{-n+\ve}  |\nabla_{\sn} v|^2,
\end{align}
and
\begin{equation}\label{sp22}
2\alpha^2 \ve^2  \int r^{-2\alpha+2\beta + 2\ve - 4} v \Delta_{\sn} v=- 2\alpha^2 \ve^2    \int r^{-n+2\ve}  |\nabla_{\sn} v|^2.
\end{equation}
Keeping in mind that on the domain of integration we have $0<r\le R<1$, from \eqref{sp1}, \eqref{sp2} and \eqref{sp22} we deduce that if $\ve$ is sufficiently small, for instance, $0<\ve\le\frac{3}{20}$ would do, we can guarantee that
\begin{align}\label{sp4}
&- 4 \ve \alpha  \int r^{-2\alpha +2\beta+ \ve-3 } v_r \Delta_{\sn} v -  2 \alpha\ve\left(2\beta+\ve+ n-2\right) \int r^{-2\alpha + 2\beta + \ve -4} v \Delta_{\sn} v
\\
&  +2 \alpha^2\ve^2  \int r^{-2\alpha +2\beta +2\ve -4} v \Delta_{\sn} v\geq \frac{37}{10} \alpha^2 \ve  \int r^{-n+\ve} |\nabla_{\sn} v|^2.
\notag
\end{align}
We also claim that
\begin{equation}\label{kt2}
\int r^{-2\alpha+2\beta-2} v v_{rr} =  \int r^{-n+2} v v_{rr} =- \int r^{-n+2} v_{r}^2. 
\end{equation}
To see \eqref{kt2} we apply Lemma \ref{L:radial} with $g = r^{-n+2} v$ and $f = v_r$, obtaining
\[
\int r^{-2\alpha+2\beta-2} v v_{rr} =  \int r^{-n+2} v v_{rr} =- \int r^{-n+2} v_{r}^2 + (n-2) \int r^{-n+1} v v_r. 
\]
Since the last term vanishes in view of \eqref{1}, we conclude that \eqref{kt2} holds. Next, again by Lemma \ref{L:radial} we have 
\begin{equation}\label{vr}
\int r^{-2\alpha + 2\beta -1} v_r v_{rr}= \frac{1}{2} \int r^{-n+3} (v_r^2)_r  = - \int r^{-n+2} v_r^2.
\end{equation}
Yet another application of Lemma \ref{L:radial} gives
\begin{align} \label{3.9}
& 2\int r^{-2\alpha+2\beta-2}v_{rr}\Delta_{\sn}v = 2\int r^{-n+2} v_{rr} \Delta_{\sn} v 
\\
&= -2 \int r^{-n+2} v_r \Delta_{\sn} v_r - 2 (n-1)\int r^{-n+1} v_r \Delta_{\sn} v    
\nonumber\\
&=2\int_{0}^T \int_0^{\infty} r\int_{\mathbb{S}^{n-1}} |\nabla_{\sn} v_r|^2 d\omega dr dt \geq 0.
\nonumber
\end{align}
Note that in the third equality above we have used that
$\int r^{-n+1} \Delta_{\sn} v v_r =0$,
a fact which was earlier established in \eqref{zero}. 

A further application of Lemma \ref{L:radial} gives
\begin{equation*}\label{vvr}
2 \int r^{-2\alpha + 2\beta+\ve -3} vv_r = - \ve \int r^{-n+\ve} v^2.
\end{equation*}
Using this observation along with \eqref{choice} we obtain
\begin{align*}
&- 2 \alpha \ve (2\beta+n-1)  (2 \beta+ \ve + n-2) \int r^{-2\alpha + 2\beta+\ve -3} vv_r  + 2 \alpha^2 \ve^2 (2\beta+n-1)  \int r^{-2\alpha+2\beta+2\ve -3} vv_r \\
&- 4  \alpha \ve \beta(\beta+n-2) \int r^{-2\alpha+2\beta +\ve -3} vv_r 
\\
& = \alpha \ve^2 \bigg[ (2\alpha+\ve)  (2 \alpha+ \ve + 2) - \alpha \ve (2\alpha + 3) + (2\alpha+4 - n)(2\alpha+n)\bigg] \int r^{-n+\ve} v^2.
\end{align*}
On the other hand, again by \eqref{choice} we have
\begin{align*}
& -  2 \alpha \ve \beta(\beta+n-2) (2\beta+\ve+n-2)  \int r^{-2\alpha + 2\beta +\ve -4} v^2 
+ 2 \alpha^2 \ve^2 \beta(\beta+n-2) \int r^{-2\alpha +2\beta +2\ve -4} v^2
\\
& = - \frac{\alpha \ve}2 (2\alpha +4-n)(2\alpha+n)(2\alpha+2 +\ve(1-\frac{\alpha}2)) \int r^{-n +2\ve} v^2.  
\end{align*}
Combining the latter two observations we conclude that there exists a universal constant $C>0$ such that, if $0<\ve \le \frac{3}{20}$ as above, and $\alpha>1$ is sufficiently large depending on the dimension $n$, then
\begin{align}\label{v2}
&- 2 \alpha \ve (2\beta+n-1)  (2 \beta+ \ve + n-2) \int r^{-2\alpha + 2\beta+\ve -3} vv_r  + 2 \alpha^2 \ve^2 (2\beta+n-1)  \int r^{-2\alpha+2\beta+2\ve -3} vv_r 
\\
&- 4  \alpha \ve \beta(\beta+n-2) \int r^{-2\alpha+2\beta +\ve -3} vv_r 
\notag
\\
& -  2 \alpha \ve \beta(\beta+n-2) (2\beta+\ve+n-2)  \int r^{-2\alpha + 2\beta +\ve -4} v^2 
+ 2 \alpha^2 \ve^2 \beta(\beta+n-2) \int r^{-2\alpha +2\beta +2\ve -4} v^2
\notag
\\
& \geq - C \alpha^4 \ve \int r^{-n+\ve} v^2.  
\notag
\end{align}
We also note that by further restricting $\ve$, say $0<\ve\le \frac{1}{20}$, we can ensure by \eqref{choice} that for $\alpha>1$ we have
\begin{equation}\label{spt2}
4 \alpha \ve (2\beta+n-1) \int r^{-2\alpha + 2 \beta +\ve -2} v_r^2 = 4 \alpha\ve (2\alpha+3)  \int r^{-n+2+\ve} v_r^2 \leq \alpha^2 \int r^{-n+2} v_r^2.
\end{equation}
From \eqref{vr}, \eqref{spt2} and  \eqref{choice} once again, we infer the following estimate for $\alpha$ large enough (say, $\alpha\ge 10$)
\begin{align}\label{tyu}
& (2\beta+n-1)^2 \int r^{-2\alpha + 2\beta - 2} v_r^2 + 2 (2\beta+n-1)\int r^{-2\alpha + 2\beta -1} v_r v_{rr}
\\
& - 4 \alpha \ve (2\beta+n-1) \int r^{-2\alpha + 2 \beta +\ve -2} v_r^2
\notag\\
& \ge \big[(2\alpha+3)^2 - 2(2\alpha+3) - \alpha^2\big]\int r^{-n +2} v_r^2 \ge 2 \alpha^2 \int r^{-n +2} v_r^2.
\notag
\end{align}
The tenth integral in the right-hand side of \eqref{rt12nd} is simply handled as follows
\begin{align}\label{10}
& \left|2(2\beta+n-1) \int r^{-2\alpha+2\beta-1} v_t v_r\right|\le 4\alpha(1+\frac{3}{2\alpha})\int r^{-n+3} |v_t| |v_r|
\\
& \le 5\alpha\left(\frac{\alpha}5 \int  r^{-n+2} v_r^2 + \frac{5}{\alpha} \int  r^{-n+4} v_t^2\right) \le \alpha^2 \int  r^{-n+2} v_r^2 + 25 \int  r^{-n+4} v_t^2.
\notag
\end{align}

Finally, we handle the first integral in the right-hand side of \eqref{rt12nd}. We stress that such integral accounts for the critical term in the Carleman estimate \eqref{est1}. 
Recall that in the Fourier decomposition of $L^2(\sn)$, if $Y_k(\omega)$ is a  spherical harmonic of degree $k \in \mathbb{N}\cup\{0\}$ (that we assume normalised so that $\int_{\sn} |Y_k(\omega)|^2 d\omega = 1$), then $\Delta_{\sn} Y_k = - k(k+n-2) Y_k$. Therefore, if we write $v(x,t) = v(r\omega,t)$, and we indicate with $v_k(r,t) = \int_{\sn} v(r\omega,t) Y_k(\omega) d\omega$ its $k$-th Fourier coefficient in the Fourier decomposition $v(r\omega,t) = \sum_{k=0}^\infty v_k(r,t) Y_k(\omega)$, then we have
\[
\Delta_{\sn} v(r\omega,t) = - \sum_{k=0}^\infty k(k+n-2) v_k(r,t) Y_k(\omega).
\]
Using this representation and Parseval's theorem, we obtain 
\begin{align*}
& \int r^{-2\alpha+2\beta-4} (\beta(\beta+n-2) v + \Delta_{\sn}v )^2 
 = \int r^{-n}( \beta(\beta+n-2) v + \Delta_{\sn} v)^2
 \\
& =  \int_{0}^T \int_{0}^{\infty} r^{-1} \sum_{k=0}^{\infty} ( \beta(\beta+n-2) - k(k+n-2))^2 v_k (r,t) ^2 dr dt 
\end{align*}
At this point, we assume that $\text{dist}(\beta, \mathbb{N})= 1/2$. Since for every $k \in \mathbb{N}\cup\{0\}$ we have 
\[
 (\beta(\beta+n-2) - k(k+n-2))^2 = ((\beta-k) (\beta+k + n-2))^2\ge \frac 14 \left(\alpha+\frac{2k+n}2\right)^2,
\]
we thus infer  
\begin{align}\label{spt1}
& \int r^{-n}( \beta(\beta+n-2) v + \Delta_{\sn} v)^2 \ge  \frac{\alpha^2}{4}  \int_{0}^T \int_{0}^{\infty} r^{-1} \sum_{k=0}^{\infty} v_k (r,t) ^2 dr dt = \frac{\alpha^2}{4} \int r^{-n} v^2.
\end{align}
We now observe that, in view of \eqref{choice}, we have $\beta = k + \frac 12$ for some $k\in \mathbb N$ if and only if $\alpha = k-2 + \frac{n+1}2$. This shows that for $\alpha$ large (depending on $n$)
\begin{equation}\label{alpha}
\text{dist}(\beta, \mathbb{N})=\frac{1}{2}\ \Longleftrightarrow\ \alpha = k + \frac{n+1}{2},\ \ \text{for some}\ \ k\in \mathbb N.
\end{equation}
Combining \eqref{rt12nd}-\eqref{spt1}, we reach the conclusion that there exists a universal $C>0$ such that for all $\alpha>1$ sufficiently large (depending on $n$) and as in \eqref{alpha}, and for every $0<\ve\le \frac{1}{20}$, one has  
\begin{align*}
& \int r^{-2\alpha}  e^{2\alpha r^\ve} (\Delta u - u_t)^2 \geq \frac{\alpha^2}{4}  \int r^{-n} v^2 +  2\alpha^2  \int r^{-n+2} v_r^2 + \frac{37}{10} \alpha^2 \ve \int r^{-n+\ve} |\nabla_{\sn} v|^2 
\\
& - \alpha^2 \int  r^{-n+2} v_r^2 - 25 \int  r^{-n+4} v_t^2 - C\alpha^4 \ve \int r^{-n+\ve} v^2
\\
& = \frac{\alpha^2}{4}  \int r^{-n} v^2 +  \alpha^2  \int r^{-n+2} v_r^2 + \frac{37}{10} \alpha^2 \ve \int r^{-n+\ve} |\nabla_{\sn} v|^2   - 25 \int  r^{-n+4} v_t^2 - C\alpha^4 \ve \int r^{-n+\ve} v^2.
 \\
 & \ge \frac{\alpha^2}{4}  \int r^{-n} v^2 - 25 \int  r^{-n+4} v_t^2 - C\alpha^4 \ve \int r^{-n+\ve} v^2.
\end{align*}
Recalling \eqref{choice}, and that $v = r^{-\beta} e^{\alpha r^\ve} u$, we conclude that we have  established the following bound
\begin{align}\label{hold1}
& \int r^{-2\alpha}  e^{2\alpha r^\ve} (\Delta u - u_t)^2 \geq \frac{\alpha^2}{4}  \int r^{-2\alpha -4} e^{2\alpha r^\ve} u^2
\\
&  - 25 \int  r^{-2\alpha} e^{2\alpha r^\ve} u_t^2 - C\alpha^4 \ve \int r^{-2\alpha - 4 +\ve} e^{2\alpha r^\ve} u^2.
\notag
\end{align}
Keeping in mind that our final objective is proving \eqref{est1}, we mention at this point that the two negative terms in the right-hand side of  \eqref{hold1} represent a series obstruction toward such goal. To overcome such difficulty we will establish the following delicate a priori bound. We stress that, differently from \eqref{hold1}, the spectral gap assumption  \eqref{alpha} is not needed. 

\begin{lemma}\label{L:del} 
Let $R<1$ and let  $u \in C_0^{\infty}((B_R \setminus \{0\}) \times (0,T))$. There exist constants $C_0= C_0(n)>0$, $\alpha(n)>>1$ and $0<\ve(n)<<1$, such that for all $\alpha\ge \alpha(n)$ and every $0<\ve<\ve(n)$ one has 
\begin{align}\label{claim1}
\frac{C_0}{\alpha} \int r^{-2\alpha} e^{2\alpha r^\ve} u_t^2 + C_0\alpha^3\ve^2 \int r^{-2\alpha-4+\ve} e^{2\alpha r^\ve} u^2   \leq \int r^{-2\alpha}  e^{2\alpha r^\ve} (\Delta u - u_t)^2.
\end{align}
\end{lemma}

The proof of Lemma \ref{L:del} is postponed to the end of the section. With such result in hands we now proceed to complete the proof of Theorem \ref{thm2}.
We fix $0<\ve(n)<1$ and $\alpha(n)>>1$ such that \eqref{hold1} and \eqref{claim1} hold simultaneously for $0<\ve<\ve(n)$ and $\alpha>\alpha(n)$ and satisfying \eqref{alpha}.  We then choose and fix $\ve \in (0, \ve(n))$. Corresponding to such a  choice of $\ve$, we now select $C_2 = C_2(n,\ve)>1$ such that
 \begin{equation*}
 C_2 C_0  \ve  \geq 2 C \ \ \ \text{and}\ \ \ C_2 C_0 > 25.
 \end{equation*}
With such constant $C_2$ in hands, we multiply \eqref{claim1} by $C_2\alpha$ and add the resulting inequality to \eqref{hold1}, obtaining
 \begin{align}\label{hold2}
& \frac{\alpha^2}{4}  \int r^{-2\alpha - 4} e^{2\alpha r^\ve} u^2 + (C_2 C_0 \ve -C)\alpha^4 \ve  \int r^{-2\alpha - 4 +\ve} e^{2\alpha r^\ve} u^2
\\
& + (C_2 C_0 - 25 )\int r^{-2\alpha} e^{2\alpha r^\ve} u_t^2  \le (C_2 \alpha +1 )\int r^{-2\alpha} e^{2\alpha r^\ve} (\Delta u - u_t)^2.
\notag
\end{align}
By our choice of $C_2$, and after dividing through by $\alpha$, the following inequality easily follows from \eqref{hold2}  
\begin{align*}
& \alpha  \int r^{-2\alpha - 4} e^{2\alpha r^\ve} u^2 + \alpha^3  \int r^{-2\alpha - 4 +\ve} e^{2\alpha r^\ve} u^2
\le \frac{2 C_2}{\min\{1/4,C\ve\}}  \int r^{-2\alpha} e^{2\alpha r^\ve} (\Delta u - u_t)^2.
\end{align*}
Modulo Lemma \ref{L:del}, this completes the proof of the Carleman estimate \eqref{est1}. 
 
\end{proof}

We now turn to the 
\begin{proof}[Proof of Lemma \ref{L:del}]
The proof of the estimate \eqref{claim1} is somewhat delicate. Letting as before $v=r^{-\beta} e^{\alpha r^\ve}  u$, at first we write the expression of  $\Delta u - u_t$ in \eqref{comp1} in the form
\[
\Delta u- u_t = a+ b,
\]
with 
\[
a= r^{\beta}  e^{-\alpha r^\ve} (v_{rr} + B(r, \alpha, \beta) v  + r^{-2} \Delta_{\sn} v ),
\]
and
\[
b= r^{\beta} e^{-\alpha r^{\ve}}( A(r, \alpha, \beta) v_r - v_t),
\]
  where 
\begin{equation}\label{pol0}
\begin{cases}
B(r, \alpha, \beta)=\left(\alpha^2 \ve^2 r^{2\ve-2} + \beta(\beta+n-2)r^{-2}- \alpha\ve(2\beta+\ve+ n-2) r^{\ve-2}\right),
\\
\\
A(r, \alpha, \beta)=(2\beta+n-1) r^{-1} - 2 \alpha \ve r^{\ve-1}.
\end{cases}
\end{equation}  
In what follows, for the sake of brevity we simply write $A$ and $B$, instead of $A(r, \alpha, \beta)$ and $B(r, \alpha, \beta)$. Also, the equation \eqref{choice} will be repeatedly used without further reference to it. For instance, we note that, using such equation, we have from \eqref{pol0} the following alternative expression of $A$ and $B$
\begin{equation}\label{pol}
\begin{cases}
B =\left(\alpha^2 \ve^2 r^{2\ve-2} + \alpha^2 (1+o(1)) r^{-2}- 2\alpha^2 \ve(1+o(1)) r^{\ve-2}\right),
\\
\\
A = 2\alpha (1+o(1)) r^{-1} - 2 \alpha \ve r^{\ve-1},
\end{cases}
\end{equation} 
where we have denoted with $o(1)$ quantities which do not depend of $r$ and such that $|o(1)| \to 0$ as $\alpha \to \infty$.
The reader should note that among the ensuing computations that lead  to \eqref{claim1}, some are similar to those of  \eqref{rt12nd}-\eqref{spt1}, and therefore several details will be skipped.   

With this being said, our strategy is to expand  $(\Delta u - u_t)^2 =a^2 + b^2 + 2ab$, 
and then estimate from below  each of the corresponding integrals 
$\int r^{-2 \alpha} e^{2\alpha r^\ve} a^2$, $\int r^{-2 \alpha} e^{2\alpha r^\ve}  b^2$, and  $2\int r^{-2 \alpha} e^{2\alpha r^\ve} ab$
  in an appropriate way. 
We begin with 
\begin{align}\label{sepr}
& 2\int r^{-2\alpha} e^{2\alpha r^\ve} ab = 2 \int A r^{-2\alpha + 2\beta}  v_r v_{rr} + 2  \int A B r^{-2\alpha+ 2\beta}  vv_r
\\
& + 2 \int A r^{-2\alpha+2 \beta-2}  v_r \Delta_{\sn} v - 2 \int r^{-2\alpha+2\beta} v_t (v_{rr} + B v + r^{-2} \Delta_{\sn} v),\notag
\end{align}
and estimate each term that appears in the right-hand side of \eqref{sepr} separately. 
By Lemma \ref{L:radial} we have
\begin{align}\label{vr1}
&2 \int A r^{-2\alpha + 2\beta}  v_r v_{rr} = -2(2\alpha+3)\int \int r^{-n+2} v_r^2 
\\
& + 2\alpha \ve (2-\ve)\int \int r^{-n+2+\ve} v_r^2 \geq - 5 \alpha \int r^{-n+2} v_r^2,
\notag
\end{align}
provided $\alpha\ge 6$. Next, we find from \eqref{pol} 
\begin{align}\label{o1}
& 2 \int  A B r^{-2\alpha + 2\beta} v v_r = 
 4\alpha^3(1+o(1)) \int r^{-n+1} v v_r + 12\alpha^3 \ve^2 (1+o(1)) \int r^{-n+1+2\ve} vv_r
 \\
 & - 12 \alpha^3 \ve (1+o(1)) \int r^{-n+1+\ve} vv_r - 4 \alpha^3 \ve^3 \int r^{-n+1+3\ve} v v_r.
\notag
\end{align}
As in \eqref{1}, we have 
\begin{align*}
\int r^{-n+1} vv_r=0.
\end{align*}
A repeated application of Lemma \ref{L:radial}, and the fact that $r\le R<1$, give
\[
2 \int r^{-n+1+2\ve} vv_r \ge - 2\ve \int r^{-n+\ve} v^2,
\]
\[
- 2 \int r^{-n+1+\ve} vv_r = \ve \int r^{-n+\ve} v^2,
\]
and
\[
- 2 \int r^{-n+1+3\ve} vv_r =  3 \ve \int r^{-n+3\ve} v^2 \ge 0.
\]
Using the latter four relations in \eqref{o1}, we conclude
\begin{align*}
& 2 \int  A B r^{-2\alpha + 2\beta} v v_r \ge \left[6 \alpha^3 \ve^2 (1+o(1)) - 12\alpha^3 \ve^3 (1+o(1))\right] \int r^{-n+\ve} v^2
\\
& = 6 \alpha^3 \ve^2 (1-2\ve)(1+o(1))\int r^{-n+\ve} v^2.
\end{align*}
It is clear from this estimate that, if $0<\ve<\frac{1}{240}$, then for $\alpha>>1$ sufficiently large we have
\begin{equation}\label{lb1}
2 \int  A B r^{-2\alpha + 2\beta} v v_r \ge \frac{59}{10} \alpha^3 \ve^2 \int r^{-n+\ve} v^2.
\end{equation}
Furthermore, we obtain from \eqref{pol}
\begin{align}\label{undes}
& 2 \int A r^{-2\alpha+2 \beta-2} v_r \Delta_{\sn} v = 4\alpha(1+o(1)) \int r^{-n+1} v_r \Delta_{\sn} v 
\\
&  - 4\alpha \ve  \int r^{-n+1+\ve} v_r \Delta_{\sn} v =- 2\alpha \ve^2  \int r^{-n+\ve} |\nabla_{\sn} v|^2,
\notag
\end{align}
where in the last equality we have used \eqref{zero} and \eqref{sp1}. We also have
\begin{align*}
&- 2 \int r^{-2\alpha+2\beta} v_t (v_{rr} + B v + r^{-2} \Delta_{\sn} v) = - 2 \int r^{-n+4} v_t v_{rr}
\\
& - 2 \int B r^{-n+4} v v_t - 2 \int r^{-n+2} v_t \Delta_{\sn} v. 
\end{align*}  
The latter two integrals in the right-hand side vanish, similarly to the two computations following \eqref{choice}. Using instead Lemma \ref{L:radial}, we find
\begin{align*}
& -2 \int r^{-n+4} v_{rr} v_t =  6 \int r^{-n+3} v_t v_r + 2 \int r^{-n+4} v_r v_{tr}
\\
&= 6 \int r^{-n+3} v_t v_r +  \int \partial_t (r^{-n+4} v_r^2) = 6 \int r^{-n+3} v_t v_r.
\end{align*}
We deduce that 
\begin{align}\label{vg50}
&- 2 \int r^{-2\alpha+2\beta} v_t (v_{rr} + B v + r^{-2} \Delta_{\sn} v)=  6 \int r^{-n+3} v_t v_r.
\end{align}
If we now combine \eqref{sepr}-\eqref{vg50} we conclude that 
\begin{align}\label{ab}
& 2 \int  r^{-2\alpha} e^{2\alpha r^{2\ve} }ab \geq \frac{59}{10} \alpha^3 \ve^2 \int r^{-n+\ve} v^2 + 6 \int r^{-n+3} v_t v_r
\\
& - 2\alpha \ve^2 \int r^{-n+\ve} |\nabla_{\sn} v|^2 - 5 \alpha \int r^{-n+2} v_r^2.
\notag
\end{align}
Our next objective is to eliminate  the  negative term   $ - 2\alpha \ve^2 \int r^{-n+\ve} |\nabla_{\sn} v|^2$  in the right hand side of \eqref{ab}.  We stress that, although at a first glance it might seem that such integral could be absorbed by the above discarded positive term $\frac{37}{10} \alpha^2 \ve \int r^{-n+\ve} |\nabla_{\sn} v|^2 $ in \eqref{hold1}, a careful look at the analysis that led to \eqref{hold2} reveals that this would not work.

Having said this, to accomplish our objective we instead proceed with estimating from below $\int r^{-2\alpha} e^{2\alpha r^\ve} a^2$ as follows 
\begin{align}\label{nm}
& \int r^{-2\alpha} e^{2\alpha r^\ve} a^2= \int r^{-2\alpha+2\beta}    ( v_{rr} + Bv  + r^{-2} \Delta_{\sn} v )^2 
\\
& = \int r^{-2\alpha+2\beta} \left( ( v_{rr} + B v+ r^{- 2} \Delta_{\sn} v + \alpha \ve^2   r^{\ve-2} v) - \alpha \ve^2   r^{\ve-2} v\right)^2
\notag
\\ 
& \geq - 2 \alpha \ve^2   \int r^{-n+2+\ve} v \left(  v_{rr} + (B + \alpha \ve^2 r^{\ve-2}) v + r^{- 2} \Delta_{\sn}v\right) 
\notag
\end{align}
where in the last inequality above we have used $(c_1+c_2)^2 \geq 2c_1c_2$, with 
\[
c_1=  v_{rr} + (B + \alpha \ve^2 r^{\ve-2}) v+ r^{- 2} \Delta_{\sn} v,\ \ \ \ \ \ \ \ \ \ 
c_2= - \alpha \ve^2   r^{\ve-2} v.
\]
We then estimate each term in the right-hand side of \eqref{nm} as follows.  First, the divergence theorem on $\sn$ gives
\begin{equation}\label{kl1}
-2 \alpha \ve^2 \int  r^{-n +\ve} v \Delta_{\sn} v=  2 \alpha \ve^2 \int r^{-n+\ve} |\nabla_{\sn} v|^2,
\end{equation}
which precisely eliminates the negative term $- 2\alpha \ve^2 \int r^{-n+\ve} |\nabla_{\sn} v|^2$ in  \eqref{ab}, see \eqref{tough1} below. Secondly, a repeated application of Lemma \ref{L:radial} gives
\begin{equation}\label{kl2}
-2 \alpha  \ve^2  \int r^{-n+2 +\ve} v v_{rr} = 2 \alpha \ve^2 \int r^{-n+2+\ve} v_r^2 - \alpha \ve^3 (1+\ve) \int r^{-n+\ve} v^2. 
\end{equation}
Thirdly, using the expression of $B$ in  \eqref{pol} it is easily seen that for $\alpha>>1$ sufficiently large and $0< \ve < \frac{1}{240}$ we have
\begin{equation}\label{kl4}
-2\alpha \ve^2  \int r^{-n+2+\ve} (B  +\alpha \ve^2 r^{\ve -2} )   v^2  \geq  - 3 \alpha^3 \ve^2 \int  r^{-n+\ve} v^2.
\end{equation}
From \eqref{nm}-\eqref{kl4} we infer that
\begin{equation}\label{kl5}
\int r^{-2\alpha} e^{2\alpha r^\ve}   a^2  \geq 2 \alpha \ve^2 \int r^{-n+\ve} |\nabla_{\sn} v|^2  - \alpha \ve^3 (1+\ve) \int r^{-n+\ve} v^2 - 3 \alpha^3 \ve^2 \int  r^{-n+\ve} v^2. 
\end{equation}
Combining now \eqref{ab} and \eqref{kl5}, and keeping in mind that $b = r^\beta e^{-\alpha r^{\ve}}(A v_r - v_t)$,
we obtain 
\begin{align}\label{tough1}
& \int r^{-2\alpha} e^{2\alpha r^{\ve}} (\Delta u- u_t)^2 \geq \left( \frac{29}{10} \alpha^3 \ve^2 - \alpha \ve^3 (\ve+1) \right)  \int r^{-n+\ve} v^2 - 5 \alpha \int r^{n+2} v_r^2 
\\
& + 6 \int r^{-n+3} v_t v_r + \int r^{-n+2} (r A v_r - r v_t)^2.  
\notag
\end{align}  
Now, for $ 0<\ve< \frac{1}{10}$ we have
\[
\frac{29}{10} \alpha^3 \ve^2 - \alpha \ve^3 (\ve+1) \geq 2 \alpha^3 \ve^2.
\]
Using this in \eqref{tough1} above, we deduce the following estimate
\begin{align}\label{tough}
& \int r^{-2\alpha} e^{2\alpha r^{\ve}} (\Delta u- u_t)^2 \geq 2 \alpha^3 \ve^2 \int r^{-n+\ve} v^2 - 5 \alpha \int r^{n+2} v_r^2 
\\
& + 6 \int r^{-n+3} v_t v_r + \int r^{-n+2} (r A v_r - r v_t)^2.  
\notag
\end{align}

Our next and final objective is to bound from below the right-hand side of \eqref{tough}  by an expression that includes $\frac{C_0}{\alpha} \int r^{-n+4} v_t^2$, as desired in \eqref{claim1}. In the process, we also need to control the negative term $- 5 \alpha \int r^{-n+2} v_r^2$. 
To achieve this we consider the last two terms in the right-hand side of \eqref{tough} and, adapting a similar idea in  \cite{EV}, \cite{V}, we estimate them in two different ways. First, we proceed as follows 
\begin{align*}
\mathcal{R} & \overset{def}{=} 6 \int r^{-n+3} v_t v_r + \int r^{-n+2} (r A v_r - r v_t)^2 
\\
& = 6 \int r^{-n+3} v_t v_r + \int r^{-n+2} ((r A - 3) v_r - r v_t + 3v_r)^2
\\
& =  \int r^{-n+2} (6 rA -9)v_r^2 + \int r^{-n+2} ((r A - 3) v_r - r v_t)^2
\\
& \ge \int r^{-n+2} (6 rA -9)v_r^2.
\end{align*}
Using \eqref{pol}, it is easy now to recognise that for $\alpha>>1$ large and $0 < \ve < \frac{1}{300}$, we have $r A \geq  \frac{19}{10} \alpha$, and therefore 
$(6 rA -9) \ge \frac{114}{10}  \alpha -9 \ge \frac{112}{10} \alpha$,
by increasing further the value of $\alpha>>1$. In conclusion, we obtain
\begin{equation}\label{ty1}
\mathcal{R} \geq  \frac{112}{10} \alpha \int r^{-n+2} v_r^2.
\end{equation}
Proceeding in a similar way, we recognise that 
\begin{align*}
\mathcal{R} & = 6 \int r^{-n+3} v_t v_r + \int r^{-n+2} \left(r A v_r - r\left(1-\frac{3}{rA}\right)v_t - \frac{3}{A}v_t\right)^2
\\
& = 6 \int r^{-n+3} v_t v_r + \int r^{-n+4} \left(A v_r - \left(1-\frac{3}{rA}\right)v_t\right)^2 
\\
& + 9 \int r^{-n+2} \frac{1}{A^2} v_t^2 - 2\int r^{-n+3} \left(Av_r - \left(1-\frac{3}{rA}\right)v_t\right) \frac{3}{A} v_t
\\
& \ge \int r^{-n+4} \frac{1}{(rA)^2} \left(6 r A - 9\right) v_t^2.
\end{align*}
At this point we note that \eqref{pol} implies 
$r A \leq 2\alpha (1+o(1)) \le 3 \alpha$, 
 for $\alpha>>1$ sufficiently large. Combining this with the previous estimate from below $6 rA -9 \ge \frac{112}{10} \alpha$, we obtain $\frac{1}{(rA)^2} \left(6 r A - 9\right) \ge \frac{1}{\alpha}$, for $\alpha>>1$ large and $0 < \ve < \frac{1}{300}$. This gives
\begin{equation}\label{ty2}
\mathcal{R} \geq \frac{1}{\alpha} \int r^{-n+4} v_t^2.
\end{equation}
If we now split $\mathcal R= \frac{9}{10} \mathcal R + \frac{1}{10}\mathcal R$,
 and apply \eqref{ty1} to $\frac{9}{10}\mathcal R $, and \eqref{ty2} to $\frac{1}{10}\mathcal R$, we finally obtain
 \begin{align}\label{ty4}
 & \mathcal{R} \geq \frac{1008}{100} \alpha \int r^{-n+2} v_r^2  +\frac{1}{10\alpha}  \int r^{-n+4} v_t^2.
 \end{align} 
At this point we are almost done. Using
the inequality \eqref{ty4} in \eqref{tough}, we obtain
\begin{align*}
& \int r^{-2\alpha} e^{2\alpha r^{\ve}} (\Delta u- u_t)^2 \geq 2 \alpha^3 \ve^2 \int r^{-n+\ve} v^2 + \left(\frac{1008}{100}- 5\right) \alpha \int r^{-n+2} v_r^2 
 +\frac{1}{10\alpha}  \int r^{-n+4} v_t^2
 \\
 & \ge 2 \alpha^3 \ve^2 \int r^{-n+\ve} v^2 + +\frac{1}{10\alpha}  \int r^{-n+4} v_t^2,  
\end{align*}
where we have taken advantage of the crucial gain in positivity of the coefficient of $\int r^{-n+2} v_r^2$. If we now keep in mind that $v=r^{-\beta} e^{\alpha r^\ve}  u$, we finally deduce that  \eqref{claim1} holds.

 \end{proof}
 

\section{Proof of Theorem \ref{thm1}}\label{S:2}

In this section we show how to obtain the sucp result in Theorem \ref{thm1} from  Theorem \ref{thm2}. With the new estimate \eqref{est1} in hands, we can adapt to the critical differential inequality \eqref{par} some of the ideas that in \cite[Theor. 15, p. 658-664]{V} were developed in the subcritical context of \eqref{v}. As we have mentioned in the introduction, this entails a delicate modification of Vessella's proof. For this reason, and for the sake of the reader's comprehension, we will present a detailed account.  
We begin with the following simple Caccioppoli type inequality.
  
\begin{lemma}\label{cac}
Let $u$ be a solution to \eqref{par}  in $B_R \times (-T, T)$ and let $0 < a< 1 < b$. Then, there exists a constant $C_1>0$, depending on $n, a, b, T$ and $M$ in \eqref{par}, such that for every $r< \min\{1, R\}$ the following holds 
\[
\int_{ \{r/2<|x| < r\} \times (-T/2, T/2)} |\nabla u|^2 \leq \frac{C_1}{r^2} \int_{ \{r(1-a)/2 < |x|<  b r\} \times (-T, T) } u^2.
\]
\end{lemma}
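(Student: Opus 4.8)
The plan is to establish the Caccioppoli inequality in Lemma~\ref{cac} by the standard energy-method test-function argument, adapted to the parabolic setting and localized in an annulus. Choose a smooth cutoff $\eta(x,t) = \zeta(x)\chi(t)$, where $\zeta \in C_0^\infty(\{r(1-a)/2 < |x| < br\})$ satisfies $\zeta \equiv 1$ on $\{r/2 < |x| < r\}$ with $|\nabla \zeta| \le C/r$, and $\chi \in C_0^\infty((-T,T))$ satisfies $\chi \equiv 1$ on $(-T/2, T/2)$ with $|\chi'| \le C/T$. Multiply the equation $\Delta u - u_t = f$ (where $|f| \le \frac{M}{|x|^2}|u|$) by $\eta^2 u$ and integrate over $\{r(1-a)/2 < |x| < br\} \times (-T,T)$. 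Integration by parts in the space variables turns $\int \eta^2 u \Delta u$ into $-\int |\nabla u|^2 \eta^2 - 2\int \eta u \langle \nabla u, \nabla \eta\rangle$, while the term $-\int \eta^2 u u_t$ becomes $-\frac12 \int \eta^2 \partial_t(u^2) = \frac12 \int \partial_t(\eta^2) u^2 = \int \eta \chi' \chi \zeta^2 u^2$ after integrating by parts in $t$ (the boundary terms at $t = \pm T$ vanish by the support of $\chi$).

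Next I would rearrange to isolate $\int \eta^2 |\nabla u|^2$ on the left. The cross term $2\int \eta u \langle \nabla u, \nabla \eta\rangle$ is absorbed by Cauchy--Schwarz: $2\int \eta u \langle \nabla u, \nabla\eta\rangle \le \frac12 \int \eta^2 |\nabla u|^2 + 2 \int |\nabla \eta|^2 u^2$, and the half-share of the gradient term is absorbed into the left side. The time term is bounded by $C T^{-1}\int_{\operatorname{supp}\eta} u^2$, and the right-hand side potential term is controlled using $|x| \ge r(1-a)/2$ on the support of $\eta$, so that $\int \eta^2 \frac{M}{|x|^2} u^2 \le \frac{4M}{(1-a)^2 r^2} \int_{\operatorname{supp}\eta} u^2$. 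Collecting, since $|\nabla\eta|^2 \le C/r^2$ and (using $r < 1$) the time factor $T^{-1}$ and the constant $M$ can all be subsumed into the dominant $r^{-2}$ scale, we obtain
\[
\int_{\{r/2 < |x| < r\}\times(-T/2,T/2)} |\nabla u|^2 \le \int \eta^2 |\nabla u|^2 \le \frac{C_1}{r^2}\int_{\{r(1-a)/2 < |x| < br\}\times(-T,T)} u^2,
\]
with $C_1$ depending only on $n, a, b, T, M$.

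Two technical points deserve care. First, the argument as written requires $u$ to be a genuine test function against the equation; since $u \in H^{2,1}_{loc}$ solves a differential inequality (not an equation) pointwise a.e., one should phrase the integration by parts using the weak formulation with $\eta^2 u$ as test function, which is justified by the local Sobolev regularity and a routine density/approximation argument. Second, the use of $\chi'$ supported in $(-T, -T/2) \cup (T/2, T)$ is what makes the time boundary terms harmless; one must be slightly careful that the statement only claims the bound on the inner cylinder $(-T/2, T/2)$, which is exactly what the cutoff delivers. I do not expect a genuine obstacle here --- this is the classical Caccioppoli estimate and the only mild subtlety is bookkeeping the dependence of $C_1$ on the parameters, in particular checking that the subcritical-looking singularity $|x|^{-2}$ on the right is harmless because it is evaluated away from the origin on the annular support of the cutoff.
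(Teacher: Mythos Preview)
Your proposal is correct and follows essentially the same approach as the paper: test the equation $\Delta u - u_t = Vu$ with $\phi^2 u$ for a space-time cutoff, integrate by parts in $x$ and $t$, absorb the cross term by Cauchy--Schwarz, and bound the potential and time-derivative contributions using $|x|\gtrsim r$ on the annular support and $|\phi_t|\lesssim 1/T$. The only cosmetic difference is that you take the cutoff in product form $\zeta(x)\chi(t)$, whereas the paper uses a single space-time cutoff $\phi$; the estimates are identical.
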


\begin{proof}
From \eqref{par}, we may assume that $u$ solves
$\Delta u - u_t = Vu$, 
where
$ |V(x,t)| \leq \frac{M}{|x|^2}$. Let now $ \phi \equiv 1$ in $\{r/2 < |x| < r\} \times (-T/2, T/2)$,
and  vanishing outside $\{r(1-a)/2 < |x|<  b r\} \times (-T, T)$. 
Using $\phi^2 u$ as a test function in the weak form of the equation we obtain
\begin{align}\label{st0}
\int |\nabla u|^2 \phi^2 +  \int  uu_t \phi^2 \leq  2 \int  |\nabla u| |\nabla \phi| |\phi| |u| + \int |V| u^2  \phi^2 
\end{align}
Since an integration by parts gives 
\[
\int uu_t \phi^2= \frac{1}{2} \int (u^2)_t \phi^2 = - \int u^2 \phi \phi_t,
\]
we obtain from \eqref{st0}
\[
\int |\nabla u|^2 \phi^2  \leq  2 \int  |\nabla u| |\nabla \phi| |\phi| |u| +  \int |V| u^2  \phi^2 +  \int u^2 |\phi| |\phi_t|.
\]
By the Cauchy-Schwarz inequality we have in a standard fashion
$2 \int  |\nabla u| |\nabla \phi| |\phi| |u|  \leq \frac{1}{2} \int |\nabla u|^2 \phi^2 +  2 \int u^2 |\nabla \phi|^2$. Substitution in the latter inequality gives
\begin{align}\label{st1}
\int |\nabla u|^2 \phi^2  \leq  4 \int  u^2 |\nabla \phi|^2  +  2 \int |V| u^2  \phi^2 + 2 \int u^2 |\phi| |\phi_t|.
\end{align}
Using the bounds $|\nabla \phi| \leq C_2/|x|$, $|\phi_t| \leq C_3/T$, and the fact that $\phi, \nabla \phi, \phi_t$ are supported in $\{r(1-a)/2 < |x|<  b r\} \times (-T, T)$, we obtain from \eqref{st1} that the following holds,
\[
\int |\nabla u|^2 \phi^2 \leq \frac{C_1}{r^2} \int_{ \{r(1-a)/2 < |x|<  b r\} \times (-T, T) } u^2,
\]
for some $C_1$ depending on $n, a, b, T$ and $M$. The desired conclusion   follows by bounding from below the integral in the left-hand side with one over the region where $\phi \equiv 1$. 

\end{proof}

\begin{proof} [Proof of Theorem \ref{thm1}]
Throughout the proof the letter $C$ will indicate an all purpose constant which might change from line to line, and which could depend in some occurrences on the number $T$. In what follows it will be easier for the computations if we work with the symmetric time-interval $(-T,T)$, instead of $(0,T)$.  Without loss of generality, we also assume that $R<1$.  Let $0 < r_1 < r_2/2 < 4r_2 <r_3 < R/2$ be fixed, and let $\phi(x)$ be a smooth function such that    
$\phi(x) \equiv 0$ when $|x| < r_1/2$,
$\phi(x) \equiv 1$ when $r_1 < |x|< r_2$,
$\phi(x) \equiv 0$ when $|x|> r_3$. We now let $T_2= T/2$ and $T_1= 3T/4$, so that $0<T_2<T_1<T$. As in  \cite{V}, we let $\eta(t)$ be a smooth even function such that $\eta(t) \equiv 1$ when $|t| < T_2$, $\eta(t) \equiv 0$, when $|t| > T_1$. Furthermore, it will be important in the sequel (see \eqref{bn1} below) that $\eta$  decay exponentially near $t = \pm T_1$. As in (118) of \cite{V} we take  
\begin{equation}\label{d4}
\eta(t)= \begin{cases} 0\ \ \ \ \ \ \ \ \ -T\le t\le -T_1
\\
\exp \left(-\frac{T^3(T_2+t)^4}{(T_1 +t)^3(T_1-T_2)^4} \right)\ \ \ \ \ \ \ -T_1\le t \le -T_2,
\\
1,\ \ \ \ \ \ \  t \in -T_2\le t \le 0.
 \end{cases}
 \end{equation}
We suppose that $u$ parabolically vanishes to infinite order in the sense of \eqref{vp}, and we want to conclude that $u \equiv 0$ in $B_R \times (-T, T)$. We assume that this not the case and show that we reach a contradiction. 
Without loss of generality we can (and will) assume that
 \begin{equation}\label{assume}
  \int_{ B_{r_2} \times (-T_2, T_2)} u^2 \neq 0.
  \end{equation}
Otherwise, the result in \cite{V} implies $u \equiv 0$  in $B_R \times (-T_2, T_2)$ and by the arguments that follow we could conclude that $u \equiv 0$ also  for $|t| > T_2$. The assumption \eqref{assume} will be used in the very end in the equation \eqref{choice2}. 

Now, with $u$ as in Theorem \ref{thm1} we let  $v= \phi \eta u$. A standard limiting argument allows to use such $v$ in the Carleman estimate \eqref{est1}, obtaining
\[
\alpha \int r^{-2\alpha-4}  e^{2\alpha r^\ve} v^2  + \alpha^3 \int r^{-2\alpha - 4+\ve} e^{2\alpha r^\ve} v^2   \leq  C \int r^{-2\alpha}  e^{2\alpha r^\ve} (\Delta v - v_t)^2. 
\]
Here, we have fixed some $\ve \in (0, \ve(n))$, where $\ve(n)$ is as in the hypothesis of  Theorem \ref{thm2}. 
Keeping  in mind that 
\[
\Delta v - v_t = \phi \eta (\Delta u - u_t) + u (\eta \Delta \phi - \phi \eta_t) + 2 \eta <\nabla u,\nabla \phi>,
\]
 we obtain
 \begin{align}\label{et1}
& \alpha \int r^{-2\alpha- 4} e^{2\alpha r^\ve} v^2  + \alpha^3 \int r^{-2\alpha -  4+\ve} e^{2\alpha r^\ve} v^2 \leq  C \int r^{-2\alpha} e^{2\alpha r^\ve} (\Delta u - u_t)^2 \phi^2 \eta^2 
\\
& + C  \int r^{-2\alpha} e^{2\alpha r^\ve}  ( |\nabla u|^2 |\nabla \phi|^2 + u^2 (\Delta \phi) ^2 ) \eta^2 + u^2 \phi^2 \eta_t^2.\notag
\end{align}
Since the differential inequality \eqref{par} gives
\[
C \int r^{-2\alpha} e^{2\alpha r^\ve}(\Delta u - u_t)^2 \phi^2 \eta^2 \leq C M^2 \int r^{-2\alpha - 4} e^{2\alpha r^\ve} v^2,
\]
if we choose 
\begin{equation}\label{choice1}
\alpha \ge  2 C M^2,
\end{equation}
then the first integral in the right-hand side of \eqref{et1} can be absorbed in the left-hand side. Consequently, from the way $\phi$ and $\eta$ have been chosen, and bearing in mind that $\nabla \phi$ is supported in $\{r_1/2  < r< r_1\} \cup \{r_2  < r < r_3\}$ and that we have in such a set
$|\nabla \phi | = O(1/r), |\Delta \phi|= O(1/r^2)$, we obtain from \eqref{et1} 
\begin{align}\label{et22}
& \alpha \int r^{-2\alpha- 4} e^{2\alpha r^\ve} v^2  
 \leq C \int_{  \{r_1/2 < r < r_1\} \times (-T_1,T_1)  } e^{2\alpha r^\ve} ( r^{-2\alpha-4} u^2  + r^{-2\alpha-2} |\nabla u|^2)
\\
&   + C  r_2^{-2\alpha-4} e^{2\alpha r_2^\ve} \int_{  \{r_2 < r < r_3\} \times (-T_1, T_1)  } (u^2 +|\nabla u|^2 ) 
\notag\\
& + C \int_{ \{r_1/2 < |x| < r_3\} \times (-T_1, T_1) } r^{-2\alpha} e^{2\alpha r^\ve}    u^2 \phi^2  \eta_t^2 - 2 \alpha^3 \int r^{-2\alpha - 4+\ve} e^{2\alpha r^\ve}  v^2.
\notag
\end{align}
In \eqref{et22},  we have also  used the fact that, since  the functions 
\begin{equation}\label{monotone}
r \to r^{-2\alpha-4} e^{2\alpha r^\ve} \ \ \ \ \ \ \text{and}\  \ \ \ \ \ \  r \to r^{-2\alpha} e^{2\alpha r^\ve}  
\end{equation}
are decreasing in $(0,1)$, we can estimate 
\begin{align*}
& \int_{  \{r_2 < r < r_3\} \times (-T_1, T_1)  } r^{-2\alpha} e^{2\alpha r^\ve} (u^2 (\Delta \phi)^2 +|\nabla u|^2 |\nabla \phi|^2)
\\
& \leq C r_2^{-2\alpha-4} e^{2\alpha r_2^\ve} \int_{  \{r_2 < r < r_3\} \times (-T_1, T_1)  } (u^2 +|\nabla u|^2 ). 
\end{align*}
 We now split the second to the last term in the right-hand side of \eqref{et22} in three parts
\begin{align}\label{spt}
& C \int_{ \{r_1/2 < |x| < r_3\} \times (-T_1,T_1) } r^{-2\alpha} e^{2\alpha r^\ve}   u^2 \phi^2  \eta_t^2= C \int_{\{r_1/2 < |x| < r_1\} \times (-T_1,T_1)} r^{-2\alpha} e^{2\alpha r^\ve}  u^2 \phi^2 \eta_t^2 
\\
& + C \int_{\{r_1 < |x| < r_2\} \times (-T_1,T_1)} r^{-2\alpha}  e^{2\alpha r^\ve} u^2 \phi^2 \eta_t^2 + C \int_{\{r_2 < |x| < r_3\} \times (-T_1, T_1)} r^{-2\alpha} e^{2\alpha r^\ve}   u^2 \phi^2 \eta_t^2.
\notag
\end{align}
Since $|\eta_t| \leq C/T$, the first and third terms in the right-hand side of \eqref{spt} are respectively estimated as follows using \eqref{monotone}
\begin{align}\label{g1}
&C \int_{\{r_1/2 < |x| < r_1\} \times (-T_1,T_1)} r^{-2\alpha} e^{2\alpha r^\ve}  u^2 \phi^2 \eta_t^2  \leq  C \left(\frac{r_1}{2} \right)^{-2\alpha} e^{ \frac{2 \alpha r_1}{2}} \int_{\{r_1/2 < |x| < r_1\} \times (-T_1,T_1)}  u^2\\
& \leq C \left( \frac{r_1}{2C_1}\right) ^{-2\alpha}\int_{\{r_1/2 < |x| < r_1\} \times (-T_1,T_1)}  u^2\notag\end{align}
 where in the last inequality in \eqref{g1}, we have used the fact that  $e^{ \frac{2 \alpha r_1}{2}} \leq e^{2\alpha r_3} \leq  C_1^{2\alpha}$, for some  $C_1>0$ depending only on $r_3$, which has been fixed.  Similarly, we have
 \begin{equation}\label{g2}
C \int_{\{r_2 < |x| < r_3\} \times (-T_1,T_1)} r^{-2\alpha} e^{2\alpha r^\ve}  u^2 \phi^2 \eta_t^2  \leq  C r_2^{-2\alpha} e^{2\alpha r_2^\ve}  \int_{\{r_2< |x| < r_3\} \times (-T_1,T_1)}  u^2.
\end{equation}
In order to estimate the second term in the right-hand side of \eqref{spt}, we combine it with the last integral in the right-hand side of \eqref{et22} and observe that, since $\phi \equiv 1$ in the region $\{r_1 < |x| < r_2\}$, and the function $\eta_t$ is supported in the set $(-T_1,-T_2) \cup (T_2, T_1)$, if we indicate $U = \{r_1 < |x| < r_2\} \times [(-T_1, -T_2) \cup (T_2, T_1) ]$, we can bound
\begin{align*}
&C \int_{\{r_1 < |x| < r_2\} \times (-T_1,T_1)} r^{-2\alpha} e^{2\alpha r^\ve} u^2 \phi^2 \eta_t^2 - 2 \alpha^3  \int r^{-2\alpha-4+\ve} e^{2\alpha r^\ve}  v^2\\
&\leq \int_{U} r^{-2\alpha-4+\ve} e^{2\alpha r^\ve} u^2 \eta^2 \left(C r^{4-\ve}   \frac{\eta_t^2}{\eta^2} - 2 \alpha^3 \right)
\\
& \leq \int_{U} r^{-2\alpha-4+\ve} e^{2\alpha r^\ve} u^2 \eta^2 \left(C r^{3}   \frac{\eta_t^2}{\eta^2} - 2 \alpha^3 \right)\notag \end{align*}
Note that in the last inequality  above, we used that for $\ve < 1$, we  have  $r^{4-\ve} < r^3$, for all $r<1$. 
At this point our objective is to establish the following estimate
\begin{align}\label{c8}
&\int_{U}  r^{-2\alpha-4+\ve} e^{2\alpha r^\ve}  u^2 \eta^2 \left( C r^3  \frac{\eta_t^2}{\eta^2} - 2\alpha^3 \right) \leq  C \int_{B_R \times (-T,T)} u^2. 
\end{align}
The proof of \eqref{c8} will be accomplished in several steps. First, we note that it suffices to concern ourselves with the portion of the integral in the left-hand side of \eqref{c8} over the region $U^- = \{r_1 < |x| < r_2\} \times (-T_1, -T_2)$, since the estimate on $U^+ = \{r_1 < |x| < r_2\} \times (T_2,T_1)$ is similar. Now, if $-T_1\le t\le -T_2$, keeping in mind that $T_1 -T_2 = \frac T4$, $|T_2 + t| \le T_1 - T_2 = \frac T4$, and that $\frac 34 T \le 4T_1- 3T_2+t \le T$, from \eqref{d4} a standard calculation shows
\begin{equation*}\label{y2}
\bigg|\frac{\eta_t}{\eta}\bigg| =\bigg|\frac{T^3 (T_2 + t)^3 (4T_1- 3T_2+t)}{ (T_1-T_2)^4 (T_1 +t)^4}\bigg| \leq   \frac{4 T^3}{|T_1 +t|^4}.
\end{equation*}
Using this estimate in the above inequality, we obtain
\begin{align*}
&\int_{U^-} e^{2\alpha r^\ve}  r^{-2\alpha-4+\ve} u^2 \eta^2 \left( C r^3  \frac{\eta_t^2}{\eta^2} - 2\alpha^3 \right) 
 \leq  \int_{U^-} e^{2\alpha r^\ve} r^{-2\alpha-4+\ve} u^2 \eta^2 \left(C r^3  \frac{T^6}{(T_1 +t)^8}  - 2 \alpha^3 \right).
\end{align*}
Next, we write $U^- = D \cup (U^-\setminus D)$, where $D$ is the region in $U^-$ where the inequality
\begin{equation}\label{al}
2 \alpha^{3} \leq  Cr^3 \frac{T^6}{(T_1+t)^8}
\end{equation}
holds. Since we clearly have $\int_{U^-\setminus D} e^{2\alpha r^\ve} r^{-2\alpha-3} u^2 \eta^2 \left(C r^3  \frac{T^6}{(T_1 +t)^8}  - 2 \alpha^3 \right) \le 0$, we obtain
\begin{align}\label{c7}
&\int_{U^-} e^{2\alpha r^\ve}   r^{-2\alpha-4+\ve} u^2 \eta^2 \left( C r^3  \frac{\eta_t^2}{\eta^2} - 2\alpha^3 \right)
\\
& \leq   \int_{D} e^{2\alpha r^\ve}  r^{-2\alpha-4+\ve} u^2 \eta^2 \left(C r^3  \frac{T^6}{(T_1 +t)^8} - 2 \alpha^3 \right)
\notag
\\
& \leq C \int_{D} e^{2\alpha r^\ve} r^{-2\alpha-4+\ve} \eta u^2   \frac{\eta T^6}{(T_1+ t)^8}.
 \notag
\end{align}
Comparing the right-hand side of \eqref{c7} with that of \eqref{c8}, it should be clear to the reader that, in order to establish \eqref{c8}, it suffices at this point to be able to bound from above in $D$ the quantity $r^{-2\alpha-4+\ve} e^{2\alpha r^\ve} \eta \frac{\eta T^6}{(T_1+ t)^8}$. We accomplish this by first observing that, thanks to the exponential vanishing of $\eta$ at $t = - T_1$, see \eqref{d4}, we obtain for $t \in (-T_1, -T_2)$, 
\begin{equation}\label{bn1}
\frac{\eta T^6}{(T_1+ t)^8} \leq 	C,
\end{equation}
for some universal $C>0$ (depending on $T$). Secondly, we show that, thanks to the inequality \eqref{al}, the following holds in the region $D$ provided that we choose the parameter $\alpha$ large enough 
\begin{equation}\label{claim}
r^{-2\alpha - 4+\ve} e^{2\alpha r^\ve} \eta \leq 1.
\end{equation}
Using the expression \eqref{d4} for $\eta(t)$, we see that \eqref{claim} does hold in $D$ if and only if for $\alpha$ sufficiently large we have in such set
\begin{equation}\label{claima}
(2\alpha + 4-\ve) \log r + \frac{T^3(T_2+t)^4}{(T_1 +t)^3(T_1-T_2)^4} - 2\alpha r^{\ve} \ge 0.
\end{equation}
To prove \eqref{claima} observe that \eqref{al} can be equivalently written in $D$ as
\begin{equation*}
\frac{T_1 +t}{T} \leq \left(\frac{C}{2T^2}\right)^{1/8}\left(\frac{r}{\alpha}\right)^{3/8}
 = C \left(\frac{r}{\alpha}\right)^{3/8},
\end{equation*}
for some universal $C>0$. Since for $\alpha$  sufficiently large we trivially have 
\[
C \left(\frac{r}{\alpha}\right)^{3/8} \le C \left(\frac{R}{\alpha}\right)^{3/8} \le \frac1{12}, 
\]
we conclude that in $D$ we must have
\begin{equation}\label{p1}
\frac{T_1 +t }{T} \leq \frac1{12},
\end{equation}
if $\alpha>1$ has been chosen large enough. Since $\frac T4 = T_1 - T_2 = T_1 + t + |T_2 + t|$, from \eqref{p1} we conclude that we must have in $D$
\[
|T_2 + t| \geq \frac{T}{6}.
\]
If we now use this bound from below along with \eqref{al}, we find in $D$
\begin{align}\label{p2}
& (2\alpha + 4-\ve) \log r + \frac{T^3(T_2+t)^4}{(T_1 +t)^3(T_1-T_2)^4} - 2\alpha r^\ve 
\\
& \ge \left(\frac{4}6\right)^{4} \left(\frac 2C\right)^{3/8} T^{3/4} \left(\frac{\alpha}r\right)^{9/8} - (2\alpha + 4-\ve) \log\frac 1r  
\notag
\\
&- 2 \alpha r^{\ve} = C \left(\frac{\alpha}r\right)^{9/8} - (2\alpha + 4-\ve) \log\frac 1r - 2\alpha r^\ve \ge 0,
\notag
\end{align}
provided that $r<r_3\le1$, and that $\alpha$ is sufficiently large. We stress here the critical role of the power $\alpha^{9/8}$, versus the linear term $2\alpha + 4-\ve$, in reaching the above conclusion. This is precisely why we needed  to incorporate the subcritical term  $\alpha^3 \int r^{-2\alpha-4+\ve} e^{2\alpha r^\ve} u^2$ in our main Carleman estimate \eqref{est1}. We have thus proved \eqref{claima}, and consequently \eqref{claim}. Combining \eqref{c7}, \eqref{bn1}
and \eqref{claim}, we conclude that \eqref{c8} holds.

Using now the estimates \eqref{spt}, \eqref{g1}, \eqref{g2} and \eqref{c8} in \eqref{et22}, we find 
 \begin{align}\label{et02}
& \alpha \int r^{-2\alpha- 4} e^{2\alpha r^\ve} v^2   \leq C \int_{  \{r_1/2 < |x| < r_1\} \times (-T_1,T_1)} e^{2\alpha r^\ve}  ( r^{-2\alpha-4} u^2  + r^{-2\alpha-2} |\nabla u|^2)
\\
&   + C  r_2^{-2\alpha-4} e^{2\alpha r_2^\ve}  \int_{\{r_2 < |x| < r_3\} \times (-T_1,T_1)  } (u^2 +|\nabla u|^2 )\notag
\\
& + C\left( \frac{r_1}{C_2}\right)^{-2\alpha} \int_{\{r_1/2 < |x| < r_1\} \times (-T_1,T_1)}  u^2 
  + C \int_{B_R \times (-T, T)} u^2.
\notag
\end{align}
Note that in \eqref{et02} we have let $C_2= 2C_1$, with $C_1$ as in \eqref{g1}. 

Now by Lemma \ref{cac} and \eqref{monotone} it follows that for some universal $C_4$,
\[
\int_{  \{r_1/2 < |x|< r_1\} \times (-T_1,T_1)}  r^{-2\alpha - 2} e^{2\alpha r^\ve}  |\nabla u|^2 \leq C \left(\frac{r_1}{C_4}\right)^{-2\alpha - 4} \int_{ \{r_1/4 < |x|< 3r_1/2 \}  \times (-T,T)} u^2,
\]
and also
\[
\int_{  \{r_2 < |x| < r_3\} \times (-T_1,T_1)}  |\nabla u|^2 \leq C \int_{ B_R   \times (-T, T)}  u^2,
\]
where the constant in the latter estimate depends also on the parameters $r_2< r_3\le 1$, which are finally fixed at this point. Substituting these bounds in \eqref{et02}, we conclude that the following inequality holds for some new  universal constants $C$ and  $C_1$,
\begin{align}\label{et4}
& \alpha \int r^{-2\alpha- 4} e^{2\alpha r^\ve} v^2  \leq C \left(\frac{r_1}{C_1}\right)^{-2\alpha - 4}  \int_{  \{r_1/4 < |x| < 3r_1/2\} \times (-T, T)}  u^2  
\\
&   + C r_2^{-2\alpha-4} e^{2\alpha r_2^\ve} \int_{B_R \times (-T,T)} u^2. \notag
\end{align}
The integral in the left-hand side of \eqref{et4} can be bounded from below in the following way using \eqref{monotone},
\begin{equation}\label{b5}
\alpha \int r^{-2\alpha- 4} e^{2\alpha r^\ve} v^2 \geq   \alpha r_2^{-2\alpha - 4}  e^{2\alpha r_2^\ve} \int_{\{r_1 < |x|< r_2  \} \times (-T_2,T_2)} u^2. 
\end{equation}
Substituting \eqref{b5} in \eqref{et4}, and dividing both sides by $r_2^{-2\alpha - 4} e^{2\alpha r_2^\ve}$, we obtain
\[
\alpha \int_{\{r_1 < |x| < r_2\} \times (-T_2,T_2)} u^2 \leq C \left(\frac{r_1}{C_1r_2}\right)^{-2\alpha -4}  \int_{  \{r_1/4 < |x| < 3r_1/2\} \times (-T, T)  }  u^2 + C  \int_{  B_R \times (-T, T)  } u^2.
\]
Now adding $\alpha \int_{ B_{r_1} \times (-T_2,T_2)} u^2$ to both sides of the latter inequality, we  obtain 
\begin{align}\label{et5}
& \alpha \int_{B_{r_2} \times (-T_2, T_2)} u^2  \leq   C \left(\frac{r_1}{C_1r_2}\right)^{-2\alpha -4}  \int_{  \{r_1/4 < |x| < 3r_1/2\} \times (-T, T)}  u^2
\\
& +  \alpha \int_{ B_{r_1} \times (-T_2, T_2)} u^2+ C   \int_{  B_R \times (-T, T)  } u^2
\notag
\\
& \le 2C \left(\frac{r_1}{C_1r_2}\right)^{-2\alpha - 4}  \int_{  B_{3r_1/2}  \times (-T, T)  }  u^2
 +C  \int_{  B_R \times (-T, T)  } u^2, \notag\end{align}
where, recalling our initial choice $r_1 < r_2$, we note that in the second inequality in \eqref{et5} we have used  
\[
 \alpha \leq   \left(\frac{r_1}{C_2r_2}\right)^{-2\alpha - 4}.
 \]
Keeping in mind the hypothesis \eqref{assume}, we now choose $\alpha$ (depending on $u$) such that
\begin{equation}\label{choice2}
\alpha \int_{ B_{r_2} \times (-T_2,T_2)} u^2  \geq  2   C  \int_{  B_R \times (-T, T)  } u^2, 
\end{equation}
where $C$ is as in \eqref{et5}. Thus, by subtracting off $ C \int_{B_R \times (-T, T)} u^2$ from both sides of \eqref{et5},  we obtain 
 \begin{align}\label{et7}
\left(\frac{r_1}{C_1 r_2}\right)^{2 \alpha + 4}  \frac{\alpha}{2}  \int_{B_{r_2} \times (-T_2, T_2)} u^2 \leq 2 C \int_{ B_{3r_1/2} \times (-T, T) } u^2.
\end{align}
At this point, we fix $\alpha$ sufficiently large in such a way that \eqref{choice1}, \eqref{p1}, \eqref{p2} and \eqref{choice2} simultaneously hold. Letting $3r_1/2= s$, we obtain from \eqref{et7} that for some new constants $C, A$ depending on  $r_2, r_3, R$, the ratio $\frac{\int_{  B_R \times (-T, T)  } u^2}{\int_{ B_{r_2} \times (-T_2,T_2)} u^2}$,  and $\alpha$ (which at this point is fixed),  the following holds for all $0< s \leq r_2/8$,
\[
\int_{ B_{s} \times (-T, T) } u^2 \geq C s^{A}.
\]
Since this estimate is in contradiction with the hypothesis that $u$ parabolically vanish to infinite order in the sense of  \eqref{vp}, we have finally proved the theorem.

   \end{proof}


\begin{thebibliography}{99}


\bibitem{BG}
P. Baras \& J. A. Goldstein, \emph{The heat equation with a singular potential}. Trans. Amer. Math. Soc. \textbf{284}~(1984), no. 1, 121-139.







\bibitem{EV} L. Escauriaza \& S. Vessella, \emph{Optimal three cylinder inequalities for solutions
to parabolic equations with Lipschitz leading
coefficients}, Contemporary Mathematics, vol. 333, 2003, pp. 79-87.



\bibitem{GL}
N. Garofalo \& F. Lin, \emph{Monotonicity properties of variational integrals, $A_p$-weights and unique continuation}, Indiana Univ. Math. J. \textbf{35}~(1986),  245-268.





 \bibitem{JK}
  D. Jerison \& C. Kenig, \emph{Unique continuation and absence of positive eigenvalues for Schr\"odinger operators. With an appendix by E. M. Stein}. Ann. of Math. (2) \textbf{121}~(1985), no. 3, 463-494.  
  
\bibitem{Ho}
L. H\"ormander, \emph{Uniqueness theorems for second order elliptic differential equations}. Comm. Partial Differential Equations \textbf{8}~(1983), no. 1, 21-64. 








\bibitem{LNW}
C. Lin, G. Nakamura \& J. Wang, \emph{Quantitative uniqueness for second
order elliptic operators with strongly singular coefficients}, Rev. Mat. Iberoam.,
\textbf{27}~ (2011), 475-491.

\bibitem{Pa}
  Y. Pan, \emph{Unique continuation for Schrodinger operators with singular potentials}, Comm. Partial Differential Equations 17 (1992), no. 5-6, 953-965.  
  
  

\bibitem{RR} R. Regbaoui, \emph{Strong uniqueness for second order differential operators},
J. Diff. Eq. \textbf{141}, 2,  201-217.

\bibitem{V}
S. Vessella, \emph{Carleman estimates, optimal three cylinder inequality, and unique continuation properties for solutions to parabolic equations}, Comm. Partial Differential Equations \textbf{28}~ (2003), no. 3-4, 637-676.




\end{thebibliography}
\end{document}